\documentclass[10pt,reqno]{amsart}

\usepackage{amsmath,amssymb,mathtools}
\usepackage[left=2.6cm,right=2.6cm,top=2.2cm,bottom=2.4cm]{geometry}
\usepackage{microtype}
\usepackage[colorlinks=true,urlcolor=blue,citecolor=red,linkcolor=blue,
linktocpage,pdfpagelabels,bookmarksnumbered,bookmarksopen]{hyperref}

\allowdisplaybreaks
\numberwithin{equation}{section}

\newtheorem{thm}{Theorem}[section]
\newtheorem{lem}[thm]{Lemma}
\newtheorem{cor}[thm]{Corollary}

\theoremstyle{definition}

\theoremstyle{remark}
\newtheorem{remark}[thm]{Remark}

\newcommand{\RR}{\mathbb R}

\begin{document}
	
	\title[Rupture solutions of a biharmonic equation]
	{Rigidity of positive rupture solutions to a biharmonic equation with critical negative exponent}
	
	\author{Xia Huang}
	\address{School of Mathematical Sciences, Key Laboratory of MEA
	(Ministry of Education) \& Shanghai Key Laboratory of PMMP,
	East China Normal University}
	\email{xhuang@cpde.ecnu.edu.cn}

	\author{Yahui Jiang}
	\address{School of Mathematical Sciences, Key Laboratory of MEA
	(Ministry of Education) \& Shanghai Key Laboratory of PMMP,
	East China Normal University}
	\email{52275500028@stu.ecnu.edu.cn}

	\author{Xianmei Zhou}
	\address{School of Mathematical Sciences, Zhejiang Normal University}
	\email{xmzhou@zjnu.edu.cn}
	
	\subjclass[2020]{Primary 35J91; Secondary 35B06, 35B40, 53C21}
	\keywords{Biharmonic equation, negative exponent, isolated singularity, radial symmetry, rigidity, conformal metric}
	
	\begin{abstract}
We establish two rigidity theorems for positive rupture solutions of the conformally invariant equation
$\Delta^2u=u^{-7}$ in $\mathbb R^3\setminus\{0\}$, which extend continuously
to the origin with $u(0)=0$. First, we prove that if the associated conformal metric $g=u^{-4}|dx|^2$ has nonnegative scalar
curvature, then every such solution is radially symmetric and has the sharp rupture profile
$u(x)\sim(4/3)^{1/4}|x|^{1/2}$ as $x\to 0$; in particular, the metric is
complete at the origin. The principal novelty is a global rigidity theorem requiring neither
curvature nor symmetry: the single global condition $u(x)=o(|x|)$ at infinity
forces $u(x)\equiv(4/3)^{1/4}|x|^{1/2}$. This result provides a sharp answer to the uniqueness question posed by
McKenna and Reichel \cite{McKennaReichel} in a class with no a priori symmetry assumption. Finally, we construct complementary examples demonstrating the essential roles
of the curvature and growth hypotheses.
\end{abstract}

\maketitle

\section{Introduction}

Conformally covariant fourth-order equations form a natural bridge between
nonlinear elliptic PDEs and conformal geometry. The
Graham--Jenne--Mason--Sparling family \cite{GJMS} contains the Paneitz operator
\cite{Branson,Paneitz}; in dimension four, its transformation law governs the
$Q$-curvature and the associated variational problem \cite{ChangYang}. When $n\neq4$, write
$g_{ij}=u^{4/(n-4)}\delta_{ij}$ and normalize the $Q$-curvature so that the
conformal factor satisfies
\[
\Delta^2u=\frac{n-4}{2}Q_g u^{\frac{n+4}{n-4}}.
\]
When $n=3$, the conformal exponent is negative, so a vanishing conformal
factor produces a singular nonlinearity. Taking $Q_g=-2$ and prescribing a
zero at the origin, we are led to the problem
\begin{equation}
\label{eq1}
\begin{cases}
\Delta^2 u=u^{-7} & \text{in }\RR^3\setminus\{0\},\\
u(x)\longrightarrow0 & \text{as }x\longrightarrow0.
\end{cases}
\end{equation}

The second condition in \eqref{eq1} allows every positive solution on the
punctured space to be extended continuously to all of $\mathbb R^3$ by
defining $u(0)=0$. We call such a solution a \emph{rupture solution}. Although
the extended function vanishes at the origin, the nonlinearity $u^{-7}$ blows
up there; hence the puncture remains nonremovable for the equation. The
conformal metric
\[
g=u^{-4}|dx|^2
\]
is likewise singular at the origin. Its completeness and curvature near the
puncture are therefore encoded in the precise asymptotic behavior of $u$.
The scalar curvature is given by
\begin{equation}
\label{eq:scalar-curvature-identity}
R_g=8u^2\bigl(u\Delta u-2|\nabla u|^2\bigr).
\end{equation}

\medskip

Equation \eqref{eq1} is invariant under the scaling
$u_\rho(x)=\rho^{-1/2}u(\rho x)$ and under the Kelvin transformation
$\widehat u(x)=|x|u(x/|x|^2)$. The homogeneous ansatz
$u(x)=c|x|^{1/2}$ gives
\[
\Delta^2(c|x|^{1/2})=\frac9{16}c|x|^{-7/2},
\qquad
(c|x|^{1/2})^{-7}=c^{-7}|x|^{-7/2}.
\]
Thus $c^8=16/9$, and the canonical rupture solution is
\begin{equation}
\label{eq:explicit-solution}
u_*(x)=c_*|x|^{1/2},
\qquad c_*:=\left(\frac{4}{3}\right)^{1/4}.
\end{equation}
Three-dimensional fourth-order conformal equations and their geometric foundations were studied in \cite{XuYang}, while exact solutions of the associated integral equations were investigated in \cite{Xu}.
 The central questions are whether geometric information
forces an arbitrary rupture solution to approach $u_*$ at the puncture and
whether a condition imposed only at infinity can force equality everywhere.

\medskip

The sign in $\Delta^2u=\pm u^{-q}$ has a decisive effect on the solution
theory. The negative-sign equation $\Delta^2u=-u^{-q}$ appears in models for
MEMS and thin films; see \cite{GazzolaGrunauSweers2010,PeleskoBernstein}.
Choi and Xu \cite{ChoiXu} and Guerra \cite{Guerra} studied radial existence
and growth, Hyder and Wei \cite{HyderWei} constructed nonradial solutions and
pointwise inequalities and symmetry criteria were developed in
\cite{GuoWeiZhou,NgoNguyenPhan}. These works concern the opposite sign and,
for the most part, solutions that remain positive throughout $\RR^3$.

\medskip

The positive-sign equation $\Delta^2u=u^{-q}$ has a different endpoint
structure. In their pioneering study of the radial problem,
McKenna and Reichel \cite{McKennaReichel} asked in Section~6, Question~(6),
for which classes of solutions the model $c_*|x|^{1/2}$ is unique. Their
question is especially delicate without radial symmetry: the singular
nonlinearity prevents uniform elliptic control near the rupture point, while
the biharmonic operator has no general maximum principle with which to compare
two solutions. Theorem~\ref{thm2} answers the question in the class defined by
sublinear growth at infinity. The class is genuinely nonradial, and the
conclusion recovers both symmetry and the exact profile.

\medskip

For comparison, the higher-dimensional critical-power equation
\[
\Delta^2u=u^{\frac{n+4}{n-4}}
\qquad\text{in }\mathbb R^n\setminus\{0\},\qquad n\geq5,
\]
has a markedly different singular behavior. Positive entire solutions are
classified by the standard bubbles \cite{Lin,WeiXu}. Positive solutions with
a nonremovable isolated singularity are of Delaunay type: after rescaling,
$|x|^{(n-4)/2}u(x)$ is periodic in $\log|x|$, so the solution oscillates on
logarithmic scales rather than approaching a single profile; see
\cite{FrankKonig,GuoHuangWangWei}. The analogous second-order theory for the
Yamabe equation includes asymptotic symmetry and refined Fowler-type
asymptotics; see \cite{CGS,KMPS,Marques,MazzeoPacard,XiongZhang}.

\medskip

Our proofs are driven by two structural difficulties. First, at the rupture point, $u$ vanishes and the negative power nonlinearity $u^{-7}$ blows up,
so scale-invariant lower bounds must precede any compactness argument. Second,
the biharmonic operator has no general maximum principle, rendering the classical moving-plane
method ineffective. We overcome the first obstacle by extracting the second-order
inequality
\[
u\Delta u\geq2|\nabla u|^2
\]
from nonnegative scalar curvature. To overcome the second, we pass to the
cylinder and use coercive identities that force invariance under translation and
rotation.

\medskip

Our first theorem shows that the curvature inequality alone supplies enough
local and global control to determine both symmetry and the leading rupture
profile.

\begin{thm}
\label{thm1}
Let $u\in C^4(\RR^3\setminus\{0\})$ be a positive solution of \eqref{eq1}.
If the metric $g=u^{-4}|dx|^2$ has $R_g\geq0$, then $u$ is radially symmetric
about the origin and
\begin{equation}
\label{eq:main-origin-asymptotic}
\lim_{r\downarrow0}\frac{u(r)}{\sqrt r}=c_*.
\end{equation}
In particular, $g$ is complete at the origin.
\end{thm}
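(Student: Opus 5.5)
Since $R_g\ge0$ reads $u\Delta u\ge 2|\nabla u|^2$, I rewrite it as $\Delta(u^{-1})=-u^{-2}\Delta u+2u^{-3}|\nabla u|^2\le 0$. Thus $w:=u^{-1}$ is a positive superharmonic function on $\RR^3\setminus\{0\}$ that tends to $+\infty$ at the origin. This is the second-order information that replaces the missing maximum principle for $\Delta^2$.

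\emph{Step 1: two-sided scale-invariant bounds $C^{-1}|x|^{1/2}\le u(x)\le C|x|^{1/2}$.} For the lower bound on $u$ (equivalently an upper bound on $w$), I use the Harnack inequality for positive superharmonic functions on dyadic annuli. It gives $\max_{|x|=r}w\le C\min_{|x|=r}w$ uniformly in $r$. Hence any rescaling $u_\rho(x)=\rho^{-1/2}u(\rho x)$, after normalizing by its value at one point, has comparable sup and inf on $\{1/2\le|x|\le2\}$.

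Next I show that $\rho^{-1/2}\min_{|x|=\rho}u$ stays bounded above and below, arguing by contradiction.
\begin{itemize}
\item If this quantity tends to $0$ along $\rho_k$, then $u_{\rho_k}$ is uniformly small on the annulus. So $\Delta^2u_{\rho_k}=u_{\rho_k}^{-7}$ is enormous there. Testing against a fixed bump function and using the interior bounds that $\Delta u\ge0$ supplies (so $u$ is subharmonic, with mean-value control), this is incompatible with $u_{\rho_k}$ being small.
\item If it tends to $\infty$, the right-hand side becomes negligible, so the normalized functions $u_{\rho_k}/u_{\rho_k}(e_1)$ converge in $C^3_{\rm loc}$ to a positive biharmonic function $h$ on $\RR^3\setminus\{0\}$. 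This $h$ is subharmonic, satisfies $h\Delta h\ge2|\nabla h|^2$, and is Harnack-comparable on spheres. A Liouville/Bôcher-type classification of such $h$ (a combination of $|x|^{-1}$, $1$, $|x|$, $|x|^2$ and harmonic polynomials, constrained by the inequality and by $h\to0$ inherited at the origin when $\rho_k\to0$) should exclude every possibility. The degenerate case $\rho_k\to\infty$ is handled in the same way, using Kelvin invariance to transfer it to the origin.
\end{itemize}

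\emph{Step 2: cylindrical coordinates.} Set $t=-\log|x|$, $\theta\in S^2$ and $u=|x|^{1/2}v(t,\theta)$. The equation becomes an autonomous fourth-order equation $P(\partial_t,\Delta_{S^2})v=v^{-7}$ on $\RR\times S^2$, and Step 1 gives $C^{-1}\le v\le C$. Elliptic estimates then yield uniform $C^k$ bounds on $v$ and on all its translates in $t$. The curvature condition becomes a second-order differential inequality for $v$.

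\emph{Step 3: radial symmetry.} I would try a moving-plane argument applied to the superharmonic function $w=v^{-1}$, coupled with $\Delta u$, as a cooperative second-order system. The alternative is an energy argument on the cylinder. Multiplying the equation by $\partial_\theta v$ (a rotation Killing field) and by $\partial_t v$ gives Pohozaev-type conserved quantities. Combined with the sign information from $R_g\ge0$, these should force the angular derivatives to vanish. I expect this to be the main obstacle, because the biharmonic operator is not directly amenable to comparison. The plan is to reduce every comparison to $\Delta w\le0$ and to $\Delta u\ge0$, the latter following from $u\Delta u\ge2|\nabla u|^2$.

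\emph{Step 4: radial asymptotics.} Once $v=v(t)$, the equation is an autonomous fourth-order ODE with bounded solution and a conserved Hamiltonian. The limits as $t\to+\infty$ are constants or periodic orbits. The differential inequality from $R_g\ge0$, roughly $v\ddot v-2\dot v^2+\ldots\ge0$, rules out nonconstant periodic orbits, because integrating it over a period gives a strict sign contradiction. The only positive constant solution is $v\equiv c_*$, so $v(t)\to c_*$, which is \eqref{eq:main-origin-asymptotic}. Completeness follows immediately: the $g$-length of a radial curve to the origin is $\int_0 u^{-2}\,dr\sim c_*^{-2}\int_0 r^{-1}\,dr=\infty$.
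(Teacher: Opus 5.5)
There are genuine gaps, beginning in Step~1. First, superharmonicity of $w=u^{-1}$ does not give $\max_{|x|=r}w\le C\min_{|x|=r}w$. A positive superharmonic function only satisfies a lower bound of the form $\inf_{B_r}w\ge c\,|B_{2r}|^{-1}\int_{B_{2r}}w$, and it may carry arbitrarily tall spikes (e.g.\ $\min\{|x-x_0|^{-1},M\}$). For $u$ these are dips toward $0$ near isolated points, which is precisely what a lower bound must exclude. Hence your first bullet, which needs $u_{\rho_k}$ to be small on a whole region rather than at one point, is not closed. The paper reverses the order. The lower bound $u\ge c|x|^{1/2}$ comes first, from a doubling/blow-up argument and a Liouville theorem excluding entire positive solutions of $\Delta^2U=U^{-7}$, $U\Delta U\ge2|\nabla U|^2$. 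The spherical Harnack inequality is then derived from the fourth-order estimate $\sup_{B_1}U\le C(U(0)+F)$ of Lemma~\ref{lem:local-curvature-estimate}. That estimate is proved by a Hessian argument at a rescaled minimum: each eigenvalue is forced to be at most $1/4$ while the trace equals $1$.

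Second, and more seriously, the global bound $u\le C|x|^{1/2}$ is not available, and your blow-up cannot produce it. For $\rho_k\to\infty$ the limit $h(x)=|x|$ is positive, radial, biharmonic on $\mathbb R^3\setminus\{0\}$, and satisfies $h\Delta h=2|\nabla h|^2$, so it passes every test you list. Its Kelvin transform is the constant $1$, so there is no rupture condition to use at that end. (If the cylindrical profile were bounded on all of $\mathbb R\times\mathbb S^2$, the rigidity of bounded entire solutions would give $u\equiv u_*$, far more than the theorem asserts.) The paper only proves $u\le C\max\{1,|x|\}$ and treats infinity separately. With $t=\log|x|$, the spherical mean solves $m^{(4)}-\frac52m''+\frac9{16}m=F$ with $F$ bounded, so it equals a bounded function plus $E_+e^{t/2}+E_-e^{-t/2}$. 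The rupture condition kills $E_-$. If $E_+>0$, the Kelvin transform together with B\^ocher's theorem shows that $\widehat u(y)-\frac a2|y|$ is $C^{1,\alpha}$ near $y=0$ for some $a\ge0$, and this is what makes the angular derivatives of the profile decay at infinity.

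Steps~3 and~4 also miss the operative mechanism. Multiplying the equation by a rotational derivative of $v$ yields Pohozaev-type identities, not the vanishing of that derivative. What works is to \emph{differentiate} the equation along a Killing field $X$. Then $\phi=Xv$ solves $P\phi+7v^{-8}\phi=0$, and two facts make this coercive:
\begin{itemize}
\item the potential is positive because $s\mapsto s^{-7}$ is decreasing;
\item the quadratic form of $P$ on $\mathbb R\times\mathbb S^2$ is coercive since $|\tfrac34-\ell(\ell+1)|\ge\tfrac34$.
\end{itemize}
Testing against $\eta(t/R)\phi$, with $\phi$ and its derivatives bounded, leaves only $O(R^{-1})$ cutoff errors and forces $\phi\equiv0$. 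The boundedness of $\phi$ is exactly where the endpoint analysis above is indispensable.

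In Step~4, the curvature condition in cylindrical variables is exactly $v\ddot v-2\dot v^2+\frac14v^2\ge0$, i.e.\ $\ddot w\le w/4$ for $w=1/v$. Integrating over a period gives $0\le\frac14\int w$, which is no contradiction. In any case, bounded orbits of a four-dimensional flow need not accumulate on equilibria or periodic orbits. The paper instead shows that every entire solution with $0<c\le z\le C$ is constant. It does this by applying the same coercive cutoff identity to $z(\cdot+\tau)-z$, which solves a linear equation with a positive potential. It then passes to the limit along translates $v(t_j+\cdot)$ to obtain $v\to c_*$ at the puncture. Finally, completeness requires all curves, not only radial ones, but this is immediate from $u(\gamma)^{-2}|\gamma'|\ge C^{-2}|\gamma'|/|\gamma|$.
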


The proof converts the curvature sign into scale-invariant estimates, a global
growth bound, and control of the Kelvin transform. A rotation-difference
identity then eliminates every angular mode. Once radiality is known, an
autonomous fourth-order equation on the cylinder identifies the unique
negative-end limit. The asymptotic formula also implies the logarithmic
divergence of the length of every curve approaching the puncture.

\begin{remark}
\label{rem:curvature-assumption}
The curvature hypothesis in Theorem~\ref{thm1} is substantive. There exists a
nonradial positive solution in $C^\infty(\RR^3\setminus\{0\})$ whose scalar
curvature changes sign; see
Lemma~\ref{prop:variational-nonradial-counterexample}.
\end{remark}

Our second theorem is the principal rigidity result. It replaces the
differential curvature condition by information at only one end of the
punctured space. The hypothesis gives no a priori angular control and does not
place the solution in the radial class studied previously.

\begin{thm}
\label{thm2}
Let $u\in C^4(\RR^3\setminus\{0\})$ be a positive solution of \eqref{eq1}.
If
\begin{equation}
\label{eq:apsc-infinity-assumption}
u(x)=o(|x|)\qquad\text{as }|x|\to\infty,
\end{equation}
uniformly in direction, then
\begin{equation}
\label{eq:apsc-classification}
u(x)=c_*|x|^{1/2}\qquad\text{in }\RR^3\setminus\{0\}.
\end{equation}
Consequently, $R_g\equiv8/3$.
\end{thm}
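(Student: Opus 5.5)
We will prove Theorem~\ref{thm2} by reducing it to an integral representation and then running a moving-sphere argument. The key tool is the Kelvin transform $\widehat u(x)=|x|u(x/|x|^2)$, which again solves \eqref{eq1} on $\RR^3\setminus\{0\}$.

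\textbf{Step 1: Kelvin transform and integral representation.} Hypothesis \eqref{eq:apsc-infinity-assumption} says $\widehat u(x)=|x|u(x/|x|^2)=o(1)$ as $x\to0$. So $\widehat u$ is again a rupture solution, and $\widehat u(x)\to0$ at the origin. Since $u(x)\to0$ as $x\to0$, we also get $\widehat u(x)=o(|x|)$ at infinity. We then show that $u$ has the representation
\[
u(x)=\frac{1}{8\pi}\int_{\RR^3}|x-y|\,u(y)^{-7}\,dy+a+b\cdot x ,
\]
with $a\ge0$ and $b=0$. To get this, we first show that $\Delta u$ and $\Delta^2u$ have the required integrability. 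Positivity of $\Delta^2 u$ together with the growth $o(|x|)$ gives, by averaging over spheres, a superharmonic-type control of $\overline{\Delta u}$. This yields $\int u^{-7}(1+|y|)\,dy<\infty$; near the origin the bound is inherited from the same argument applied to $\widehat u$. A Liouville argument for the biharmonic remainder $h$ then applies: $h$ has growth $o(|x|)$, so $h$ is affine, and the sublinear growth kills the linear part. We must also exclude a distributional Dirac contribution $\alpha\,\delta_0$ in $\Delta^2u$. That term would add $\alpha|x|/(8\pi)$, and this is incompatible with $u=o(|x|)$ only if it is not cancelled. More carefully, the condition $u(0)=0$ forces $a+\frac1{8\pi}\int|y|u^{-7}=0$ once the singular part is identified, and we treat the origin via $\widehat u$ symmetrically. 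I expect this step to be the main obstacle. It requires proving $u^{-7}\in L^1_{loc}$ near $0$ and ruling out singular measures at the origin without any a priori lower bound of the form $u\gtrsim|x|^{1/2}$.

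\textbf{Step 2: Moving spheres.} With the representation in hand, the kernel $|x-y|$ is Kelvin-covariant for exponent $-7$ in dimension 3. So for every center $x_0\ne0$ and radius $\lambda$ we can compare $u$ with $u_{x_0,\lambda}(x)=\frac{|x-x_0|}{\lambda}u\bigl(x_0+\lambda^2\frac{x-x_0}{|x-x_0|^2}\bigr)$. The standard integral moving-sphere identity (as in Li and Xu for $\Delta^2u=-u^{-7}$, but with the opposite sign) yields a monotonicity relation. The sign reversal means we compare in the reversed direction: $u_{x_0,\lambda}\ge u$ outside the ball. Starting from large $\lambda$ is possible because $u=o(|x|)$ while $u_{x_0,\lambda}\sim|x|\lambda^{-1}u(x_0)$, and likewise at the origin since $u(0)=0$. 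We then decrease $\lambda$ to the critical value. The obstruction set is $\{0\}$, since $u$ vanishes there, and the critical sphere must pass through the origin, giving $\bar\lambda(x_0)=|x_0|$. Applying this for every $x_0$ and passing to the limit (a Li--Zhu type calculus lemma) shows that $u$ is radial and homogeneous under the inversions fixing $0$ and $\infty$.

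\textbf{Step 3: ODE classification.} Once radial symmetry is known, we write $u(r)=r^{1/2}v(\log r)$ on the cylinder and obtain an autonomous fourth-order ODE for $v$. Step 2 gives the invariance $u(x)=|x|\,u(x/|x|^2)\cdot(\text{const})$, which makes $v$ even and, combined with the translation-invariance produced by the moving spheres, constant. Equivalently, the coercive identity on the cylinder shows that any bounded positive solution with both ends converging must be the constant $c_*$. Hence $u=c_*|x|^{1/2}$. Finally, \eqref{eq:scalar-curvature-identity} gives $R_g\equiv8/3$ by direct computation.
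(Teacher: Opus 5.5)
\textbf{There is a genuine gap: Step~1 is false as stated, and it fails on the very function the theorem singles out.} The canonical solution $u_*=c_*|x|^{1/2}$ satisfies every hypothesis. Yet $u_*^{-7}=c_*^{-7}|x|^{-7/2}$ is not integrable near the origin, since $\int_0^1 r^{-7/2}r^2\,dr=\infty$. Likewise $|y|\,u_*(y)^{-7}\sim|y|^{-5/2}$ is not integrable at infinity. By the Kelvin change of variables, $\int_{B_1}u^{-7}\,dx=\int_{|y|>1}|y|\,\widehat u(y)^{-7}\,dy$, so these two divergences are the same phenomenon.

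Consequences for Step~1:
\begin{itemize}
\item The bound $\int u^{-7}(1+|y|)\,dy<\infty$ you aim for is false in the class.
\item The potential $\int|x-y|\,u(y)^{-7}\,dy$ diverges at every $x$.
\item $\Delta^2u_*$ is not even a Radon measure near $0$, so there is no decomposition $u^{-7}\,dy+\alpha\delta_0$ to analyze.
\item The formula also has the wrong sign. In $\RR^3$, $\Delta^2|x|=-8\pi\delta_0$, so the potential for $\Delta^2u=f$ is $-\frac1{8\pi}\int|x-y|f(y)\,dy$. Your $+$ sign is the representation for the opposite equation $\Delta^2u=-u^{-7}$. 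This is why you reach the impossible relation $a+\frac1{8\pi}\int|y|u^{-7}\,dy=0$ with $a\ge0$.
\end{itemize}

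Steps~2 and~3 have further problems:
\begin{itemize}
\item Step~2 rests on this representation.
\item Its initialization also fails independently. For $\lambda>|x_0|$, the reflection $u_{x_0,\lambda}$ vanishes at $x_0-\lambda^2x_0/|x_0|^2$. This is a nonzero point outside $\overline{B_\lambda(x_0)}$ where $u>0$, so $u_{x_0,\lambda}\ge u$ outside the ball is false for every large $\lambda$.
\item Nothing in Step~2 produces the dilation invariance $V(t+\tau,\theta)=V(t,\theta)$ that Step~3 invokes.
\item The one-dimensional classification you fall back on needs $v$ bounded above and below by positive constants. The hypotheses give only $V=o(e^{|t|/2})$ at both ends of the cylinder, and no positive lower bound at all. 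With that much growth, the cutoff errors in an unweighted energy identity are not controlled.
\end{itemize}

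\textbf{The missing idea is a comparison principle matched exactly to this growth class.} Set $V(t,\theta)=e^{-t/2}u(e^t\theta)$. The rupture condition and $u=o(|x|)$ then say precisely that $V=o(\cosh(t/2))$ at both ends. Your correct observation that $\widehat u$ stays in the class is the symmetry $t\mapsto-t$ of this condition.

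The paper proves that two positive solutions of $PV=V^{-7}$ whose difference is $o(\cosh(t/2))$ at both ends must coincide. The argument runs as follows:
\begin{itemize}
\item Write $V_1-V_2=\cosh(t/2)\,Z$ and set $T=\tanh(t/2)$.
\item The conjugated linearized operator has a nonnegative quadratic form. Its zeroth-order coefficient $T'-T'''=\frac34\operatorname{sech}^4(t/2)$ is strictly positive, and the spherical terms contribute $\lambda_\ell(\lambda_\ell-2)\ge0$.
\item A Caccioppoli estimate, uniform in the possibly unbounded coefficient $a>0$, lets the endpoint decay of $Z$ kill the cutoff errors.
\end{itemize}
Applying this to $V$ and each $V(t+\tau,\mathcal O\theta)$ gives invariance under all translations and rotations, hence $V\equiv c_*$. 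This needs no representation formula, no integrability of $u^{-7}$, and no lower bound on $u$.
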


The key device is a weighted two-solution uniqueness principle on
$\mathbb R\times\mathbb S^2$. Translation in the cylindrical variable
corresponds to scaling in Euclidean space, while rotations act on the sphere.
We compare the cylindrical profile simultaneously with all of its translates
and rotations. The weight $\cosh(t/2)$ matches the largest endpoint growth
allowed by the rupture and sublinear-growth assumptions; after conjugation,
it reveals a strictly positive zeroth-order term. This converts endpoint
smallness into a global coercive identity and forces invariance under both
groups. Thus Theorem~\ref{thm2} provides a sharp answer to Question~(6) of McKenna and Reichel 
\cite[Section~6, Question~(6)]{McKennaReichel}: the canonical solution is unique under a single
global growth condition, with symmetry obtained as part of the conclusion.

\begin{remark}
\label{rem:sublinear-growth-essential}
The growth hypothesis in Theorem~\ref{thm2} cannot be omitted altogether.
Indeed, Corollary~\ref{cor:radial-linear-growth} constructs positive radial
rupture solutions with nonzero linear growth at infinity that are distinct
from the canonical solution \(u_*\). Thus the sublinear condition separates
the rigidity class from a family of noncanonical solutions.
\end{remark}

The paper is organized as follows. Section~2 derives the curvature-based
estimates, proves radial symmetry, and identifies the rupture profile in
Theorem~\ref{thm1}. Section~3 develops the weighted two-solution principle and
proves Theorem~\ref{thm2}. Section~4 gives variational constructions that test
the necessity of the two rigidity hypotheses.

\section{A priori estimates and radial symmetry}\label{sec:symmetry}

The proof of Theorem~\ref{thm1} proceeds in three stages. We first derive
a local estimate under the scalar-curvature condition. A
blow-up argument and a Liouville obstruction then yield a scale-invariant
lower bound and a spherical Harnack inequality. These estimates control both
ends of the cylindrical profile. A cutoff identity for rotational derivatives
then yields radial symmetry, and an autonomous ODE identifies the limiting constant.

\begin{lem}\label{lem:local-curvature-estimate}
Let \(U\in C^4(B_4)\) be positive and suppose that
	\begin{equation}\label{eq:local-assumptions}
		U\Delta U\geq 2|\nabla U|^2
		\quad\text{and}\quad
		0\leq\Delta^2U\leq F
		\quad\text{in }B_4,
	\end{equation}
	where \(F\geq0\) is constant. Then there is a universal constant \(C>0\)
	such that 
	\begin{equation}\label{eq:local-curvature-estimate}
		\Delta U(0)+\sup_{B_1}U
		\leq C\bigl(U(0)+F\bigr).
	\end{equation}
\end{lem}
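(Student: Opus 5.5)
The plan is to argue by contradiction with a blow-up at the scale set by \(\Delta U(0)\). The limit is a quadratic polynomial, and the curvature inequality turns out to be incompatible with such a polynomial having Laplacian \(1\). Once \(\Delta U(0)\) is controlled, the bound on \(\sup_{B_1}U\) follows from sub/superharmonicity and Green's representation.

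\emph{Structural facts.} Put \(w=\Delta U\). Since \(U\Delta U\ge 2|\nabla U|^2\ge0\) and \(U>0\), we have \(w\ge0\), so \(U\) is subharmonic. Moreover \(0\le\Delta w\le F\), so \(w\) is subharmonic while \(w-F|x|^2/6\) is superharmonic. Combining the mean value inequality for \(w\) with the weak Harnack inequality for the nonnegative function \(w+F(16-|x|^2)/6\) (superharmonic on \(B_4\)) should give, for balls well inside \(B_4\),
\[
\sup_{B_2}w\le C\Big(\inf_{B_2}w+F\Big),\qquad\text{in particular }\ \sup_{B_2}w\le C\bigl(w(0)+F\bigr).
\]

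\emph{Step 1: \(\Delta U(0)\le C(U(0)+F)\).} Suppose this fails for a sequence \(U_k\). Normalize \(U_k(0)+F_k=1\); this is legitimate because the hypotheses are invariant under \(U\mapsto\lambda U,\ F\mapsto\lambda F\). Then \(a_k:=\Delta U_k(0)\to\infty\). Rescale by \(\tilde U_k(y)=U_k(y/\sqrt{a_k})\) on \(B_{4\sqrt{a_k}}\). The rescaled functions satisfy
\[
\Delta\tilde U_k(0)=1,\qquad \tilde U_k(0)\le1,\qquad 0\le\Delta^2\tilde U_k\le F_k/a_k^2\to0,
\]
and the curvature inequality is preserved.
\begin{itemize}
\item By the Harnack bound above, \(\tilde w_k=\Delta\tilde U_k\) is locally bounded. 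In the limit it becomes a nonnegative harmonic function on \(\RR^3\) equal to \(1\) at the origin, hence identically \(1\) by Liouville.
\item Write \(\tilde U_k=\tilde U_k(0)+q_k+h_k\), where \(q_k\) solves \(\Delta q_k=\tilde w_k\) with \(q_k\approx|y|^2/6\) and \(h_k\) is harmonic with \(h_k(0)=0\). Positivity of \(\tilde U_k\) gives \(h_k\ge -C-CR^2\) on \(B_R\).
\item Harnack applied to \(h_k+C(1+R^2)\), together with \(h_k(0)=0\), bounds \(|h_k|\le C(1+R^2)\) on \(B_{R/2}\).
\end{itemize}
This gives \(C^2_{\rm loc}\) compactness. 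The limit \(W\ge0\) is a quadratic polynomial with \(\Delta W=1\) and \(W\ge 2|\nabla W|^2\) on \(\RR^3\).

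Write \(W=c_0+b\cdot y+\tfrac12y^TAy\) with \(\operatorname{tr}A=1\). Comparing the quadratic parts along rays gives \(\tfrac12y^TAy\ge2|Ay|^2\). Hence \(A\ge0\), and on each eigenvector \(\lambda_i/2\ge2\lambda_i^2\), i.e. \(\lambda_i\le1/4\). Then \(\operatorname{tr}A\le3/4<1\), a contradiction. This is the heart of the argument.

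\emph{Step 2: \(\sup_{B_1}U\le C(U(0)+F)\).} By Step 1 and the Harnack bound, \(\Delta U\le C(U(0)+F)\) on \(B_2\). Green's representation on \(B_r\), \(r\le2\), gives
\[
\frac1{|\partial B_r|}\int_{\partial B_r}U\,d\sigma
=U(0)+\int_{B_r}G_r(0,y)\,\Delta U(y)\,dy
\le U(0)+Cr^2\bigl(U(0)+F\bigr).
\]
Integrating in \(r\) bounds \(\frac1{|B_2|}\int_{B_2}U\le C(U(0)+F)\). Since \(U\) is subharmonic, for \(x\in B_1\) the mean value inequality on \(B_1(x)\subset B_2\) gives \(U(x)\le C(U(0)+F)\), which completes \eqref{eq:local-curvature-estimate}.

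\emph{Main obstacle.} I expect the compactness in Step 1 to be the delicate part, specifically the control of the harmonic part \(h_k\) from the one-sided bound supplied by positivity. The weak Harnack inequality for the almost-superharmonic \(\Delta U\) must also be set up carefully, with the \(F\)-correction, on the balls used.
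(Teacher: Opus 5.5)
Your argument is correct, but the blow-up is organized differently from the paper's.

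\emph{The paper's route.} It first proves the preliminary bound $\sup_{B_2}U\le C(U(0)+\Delta U(0)+F)$, using Harnack for $\Delta U-p$ and for $U-q$, where $p,q$ are Dirichlet solutions. It then normalizes only the amplitude, $Z_j=U_j/\Delta U_j(0)$, on the fixed ball. Interior $W^{4,4}$ estimates for $\Delta^2$ give a $C^2(\overline{B_1})$ limit with $Z\ge0$ and $Z(0)=0$. So the origin is an interior minimum, $\nabla Z(0)=0$, and a second-order Taylor expansion of $Z\Delta Z\ge2|\nabla Z|^2$ along the Hessian eigenvectors gives $\lambda_i\le1/4$.

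\emph{Your route.} You rescale space by $x\mapsto x/\sqrt{a_k}$ and get an entire limit. Since $W(0)$ need not vanish, you cannot argue at a minimum. You compensate with two Liouville theorems, one for $\lim\Delta\tilde U_k$ and one for the harmonic part of $W$, which force $W$ to be a quadratic polynomial. You then read off the same eigenvalue inequality at infinity. In effect this is a blow-down of $W$, which does vanish at the origin, so the final algebra is identical.

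\emph{What each buys.} Your approach uses only second-order tools (mean-value inequalities, Harnack, Schauder applied to $\Delta U$ and $U$ separately). It needs no fourth-order $L^p$ theory and no a priori bound on $U$ before the blow-up. The paper's approach stays local and never has to identify an entire limit. Your Step 2, spherical means via the Green function of $B_r$ plus the sub-mean-value property of $U$, is a slightly more elementary substitute for the paper's Harnack argument for $U-q$.

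\emph{On your flagged obstacle.} The Harnack step for $w=\Delta U$ is milder than you fear. Only $\sup_{B_2}w\le C(w(0)+F)$ is ever used, and it needs no weak Harnack inequality. It follows from the sub-mean-value property of $w$ on $B_1(x)\subset B_3$ combined with the super-mean-value property of the nonnegative superharmonic function $w+F(16-|x|^2)/6$ on $B_3$. This gives $\sup_{B_2}w\le 27\,w(0)+72F$.
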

\begin{proof}
	Set \(w:=\Delta U\) and \(f:=\Delta w=\Delta^2U\). Since \(U>0\), the
	first inequality in \eqref{eq:local-assumptions} gives \(w\geq0\). Thus
	\(0\leq\Delta w=f\leq F\) in \(B_4\).

\medskip

We first derive two elementary local estimates. Let \(p\) solve
	\begin{equation*}
		\Delta p=f\quad\text{in }B_{7/2},
		\qquad p=0\quad\text{on }\partial B_{7/2},
	\end{equation*}
The maximum principle gives \(p\leq0\). For the reverse bound, set
	\begin{equation*}
		\phi(x)=\frac{F}{6}\left(|x|^2-\frac{49}{4}\right).
	\end{equation*}
Then \(\Delta\phi=F\) in \(B_{7/2}\) and \(\phi=0\) on
	\(\partial B_{7/2}\). Since \(\Delta(p-\phi)=f-F\leq0\) in \(B_{7/2}\), the minimum
	principle yields \(p\geq\phi\). Hence
	\(-\frac{49}{24}F\leq p\leq0\) in \(B_{7/2}\).

	Now \(h:=w-p\geq0\) is harmonic in \(B_{7/2}\). By the interior Harnack
	inequality and the lower bound for \(p\), we have  
	\begin{equation}\label{eq:w-estimate}
		\sup_{B_3}w
		\leq\sup_{B_3}h
		\leq Ch(0)
		\leq C\bigl(w(0)+F\bigr).
	\end{equation}
	This also covers \(h\equiv0\), in which case \(w\equiv0\) in \(B_{7/2}\).

\medskip

Next, let \(q\) solve
	\begin{equation*}
		\Delta q=w\quad\text{in }B_3,
		\qquad q=0\quad\text{on }\partial B_3,
	\end{equation*}
	and set \(M=\sup_{B_3}w\). Since \(w\ge0\), the maximum principle gives \(q\le0\) in \(B_3\).
To obtain a lower bound, consider 
	\begin{equation*}
		\psi(x)=\frac{M}{6}\bigl(|x|^2-9\bigr).
	\end{equation*}
	Then
\[
    \Delta\psi=M
    \quad\text{in }B_3,
    \qquad
    \psi=0
    \quad\text{on }\partial B_3.
\]
We obtain from the maximum principle that 
	\begin{equation*}
		-\frac32M\leq q\leq0
		\quad\text{in }B_3.
	\end{equation*}
	The function \(H:=U-q\) is positive and harmonic in \(B_3\). A second
	application of Harnack's inequality, followed by \eqref{eq:w-estimate}, gives
	\begin{equation}\label{eq:U-preliminary}
		\sup_{B_2}U
		\leq\sup_{B_2}H
		\leq CH(0)
		\leq C\bigl(U(0)+w(0)+F\bigr).
	\end{equation}

\medskip

It remains to prove that
	\begin{equation}\label{eq:w-origin-bound}
		w(0)\leq C\bigl(U(0)+F\bigr).
	\end{equation}
	Suppose this estimate fails. Then there are admissible pairs \((U_j,F_j)\)
	for which, upon setting \(A_j=\Delta U_j(0)\),
	\begin{equation*}
		\frac{A_j}{U_j(0)+F_j}\longrightarrow\infty.
	\end{equation*}
	In particular, \(A_j>0\) for all sufficiently large \(j\). Define
	\begin{equation*}
		Z_j=\frac{U_j}{A_j},
		\qquad \varepsilon_j=\frac{F_j}{A_j}.
	\end{equation*}
	Then
	\begin{equation}\label{eq:normalized-properties}
		\begin{gathered}
			Z_j>0,\qquad Z_j(0)+\varepsilon_j\longrightarrow0,
			\qquad \Delta Z_j(0)=1,\\
			Z_j\Delta Z_j\geq2|\nabla Z_j|^2,
			\qquad 0\leq\Delta^2Z_j\leq\varepsilon_j
			\quad\text{in }B_4.
		\end{gathered}
	\end{equation}
	Applying \eqref{eq:U-preliminary} to \(U_j\) and dividing by \(A_j\), we obtain
	\begin{equation*}
		\sup_{B_2}Z_j
		\leq C\bigl(Z_j(0)+1+\varepsilon_j\bigr)
		\leq C.
	\end{equation*}
	The interior \(L^4\)-estimate for the biharmonic operator
	(see the local estimate in
	\cite[Theorem~2.20]{GazzolaGrunauSweers2010}) yields
	\begin{equation*}
		\|Z_j\|_{W^{4,4}(B_{3/2})}
		\leq C\left(
			\|Z_j\|_{L^4(B_2)}
			+\|\Delta^2Z_j\|_{L^4(B_2)}
		\right)
		\leq C.
	\end{equation*}
	In dimension three, the embedding
\[
    W^{4,4}(B_{3/2})
    \hookrightarrow C^{2,1/2}(\overline{B_1})
\]
is compact. Hence, after passing to a subsequence, there exists
\(Z\in C^{2,1/2}(\overline{B_1})\) such that
\[
    Z_j\longrightarrow Z
    \quad\text{in }C^2(\overline{B_1}).
\]
	Letting $j\rightarrow \infty$  in \eqref{eq:normalized-properties}, we therefore obtain 
	\begin{equation}\label{eq:limit-properties}
		Z\geq0,\qquad Z(0)=0,\qquad \Delta Z(0)=1,
		\qquad Z\Delta Z\geq2|\nabla Z|^2
		\quad\text{in }B_1.
	\end{equation}

	Because the origin is a minimum point of \(Z\), we have \(\nabla Z(0)=0\),
	and \(D^2Z(0)\) is positive semidefinite. Let
	\(\lambda_1,\lambda_2,\lambda_3\geq0\) be its eigenvalues and
	\(e_1,e_2,e_3\) corresponding unit eigenvectors. Since \(\Delta Z(0)=1\),
	\begin{equation}\label{eq:hessian-trace}
		\lambda_1+\lambda_2+\lambda_3=1.
	\end{equation}
	For each \(i\), the \(C^2\)-regularity of \(Z\) gives, as \(t\to0\),
	\begin{equation*}
		Z(te_i)=\frac{\lambda_i}{2}t^2+o(t^2),
		\qquad
		\nabla Z(te_i)=\lambda_i t e_i+o(|t|),
		\qquad
		\Delta Z(te_i)=1+o(1).
	\end{equation*}
	Substituting these expansions into the last inequality in \eqref{eq:limit-properties}, dividing by \(t^2\), and letting \(t\to0\), we obtain
	\begin{equation*}
		\frac{\lambda_i}{2}\geq2\lambda_i^2.
	\end{equation*}
	Thus \(0\leq\lambda_i\leq1/4\) for each \(i\), so
	\(\lambda_1+\lambda_2+\lambda_3\leq3/4\), contradicting
	\eqref{eq:hessian-trace}. This proves \eqref{eq:w-origin-bound}; combining it
	with \eqref{eq:U-preliminary} gives \eqref{eq:local-curvature-estimate}.
\end{proof}
	
	\medskip
	
	The local estimate above allows us to control rescaled solutions uniformly.
To prepare for the analysis near the puncture and at infinity, we next
establish scale-invariant pointwise bounds and a spherical Harnack inequality.

\begin{lem}\label{lem:curvature-only-estimates}
		Let \(u\in C^4(\mathbb R^3\setminus\{0\})\) be a positive solution of
		\begin{equation*}
			\Delta^2u=u^{-7}
			\quad\text{in }\mathbb R^3\setminus\{0\}.
		\end{equation*}
		Assume that \(g=u^{-4}|dx|^2\) has nonnegative scalar curvature. Then there exist constants \(c,C_0>0\) and \(C_H\geq1\) such that
		\begin{equation}\label{eq:curvature-only-estimates}
			\begin{aligned}
				c|x|^{1/2}&\leq u(x)\leq C_0\max\{1,|x|\} && (x\ne0),\\
				\sup_{\theta\in\mathbb S^2}u(r\theta)
				&\leq C_H\inf_{\theta\in\mathbb S^2}u(r\theta) && (r>0).
			\end{aligned}
		\end{equation}
		%Neither completeness nor any limiting behavior at the origin is assumed.
		\end{lem}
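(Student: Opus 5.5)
Throughout I rescale and apply Lemma~\ref{lem:local-curvature-estimate}. Fix $x_0\neq0$, set $r=|x_0|/8$, and define $U(y)=r^{-1/2}u(x_0+ry)$ on $B_4$. Since $B_{4r}(x_0)$ avoids the origin, $\Delta^2U=U^{-7}$ there. The inequality $U\Delta U\ge2|\nabla U|^2$ is scale invariant and is just $R_g\ge0$. The only hypothesis of Lemma~\ref{lem:local-curvature-estimate} still missing is an upper bound $U^{-7}\le F$, which amounts to a lower bound on $U$. So the heart of the proof is the scale-invariant lower bound
\[
u(x)\ge c|x|^{1/2}.
\]

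\textbf{Lower bound.} I would argue by contradiction and blow up. Suppose $x_j$ satisfies $u(x_j)|x_j|^{-1/2}\to0$, and rescale so that $|x_j|=8$; this is legitimate because the equation, the rupture condition and the curvature sign are all invariant under $u_\rho$. Because $\Delta^2u>0$ and $u\Delta u\ge 2|\nabla u|^2$ give $\Delta u\ge0$, $u$ is subharmonic on $\mathbb R^3\setminus\{0\}$. Moreover $v=u^{-1}$ satisfies
\[
\Delta v=-u^{-2}\Delta u+2u^{-3}|\nabla u|^2\le0,
\]
so $u^{-1}$ is superharmonic. Hence $u$ is a positive function whose reciprocal is superharmonic. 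I would combine this with the equation. Near a point $x_j$ where $u$ is very small, the weak Harnack inequality for the superharmonic function $u^{-1}$ does not by itself bound $u^{-1}$ from above. Instead I would use the equation $\Delta w=u^{-7}$ with $w=\Delta u\ge0$. If $u\le\varepsilon$ on a set of positive measure inside $B_1(x_j)$, then $w$ must be very large somewhere, because $w\ge0$ and $\Delta w$ has large mass. Large $\Delta u=w$ would then force $u$ to be large by the mean value inequality for $u-q$, as in the proof of Lemma~\ref{lem:local-curvature-estimate}, and this contradicts $u$ being small near $x_j$.

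To get the small set of positive measure, I would note that the superharmonicity of $u^{-1}$ together with $u(x_j)\to0$ forces $u^{-1}$ to be large on average on small spheres about $x_j$. That is, $u$ is small on a substantial portion of $B_\delta(x_j)$. This makes the mass of $u^{-7}$ blow up. Making this quantitative is the step I expect to be the main obstacle. Alternatively, I could apply Lemma~\ref{lem:local-curvature-estimate} with a truncated $F$ and pass to a limit $Z$ with $Z\ge0$, $Z(0)=0$, $Z\Delta Z\ge2|\nabla Z|^2$, and then derive the same Hessian contradiction as in that lemma, using that $\Delta^2$ blows up.

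\textbf{Upper bound.} Once $u\ge c|x|^{1/2}$ holds, $U\ge c'$ on $B_4$, so $F=c'^{-7}$ works. Lemma~\ref{lem:local-curvature-estimate} then gives $\sup_{B_1}U\le C(U(0)+1)$, i.e.\ a Harnack-type bound $\sup_{B_{r}(x_0)}u\le C(u(x_0)+|x_0|^{1/2})$. Chaining finitely many balls around the sphere $|x|=|x_0|$ shows that
\[
\sup_{\partial B_\rho}u\le C\bigl(\inf_{\partial B_\rho}u+\rho^{1/2}\bigr),
\]
and since $\inf_{\partial B_\rho}u\ge c\rho^{1/2}$, this is the spherical Harnack inequality.

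For the upper bound $u\le C_0\max\{1,|x|\}$, I would apply the same argument to the Kelvin transform $\widehat u(x)=|x|u(x/|x|^2)$. It solves the same equation, and the metric $\widehat g$ is the pullback of $g$ under inversion, so it also has $R\ge0$. The lower bound for $\widehat u$ only gives $u(y)\ge c|y|^{1/2}$ again, so it does not directly yield the upper bound. Instead I would bound $\Delta u$. Since $\Delta u$ is the nonnegative solution of $\Delta w=u^{-7}\le C|x|^{-7/2}$, and Lemma~\ref{lem:local-curvature-estimate} after rescaling gives $\Delta u(x_0)\le C(u(x_0)|x_0|^{-2}+|x_0|^{-3/2})$, the subharmonic function $u$ grows at most linearly. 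To see this, compare spherical means $m(\rho)$ of $u$, which satisfy
\[
(\rho^2m')'=\rho^2\,\overline{\Delta u}\le C\bigl(m+\rho^{1/2}\bigr).
\]
By the spherical Harnack inequality this gives an Euler-type inequality, which I expect to yield $m(\rho)\le C\rho$ for $\rho\ge1$ after an ODE comparison. The borderline growth rates $\rho$ and $\rho^{-2}$ are exactly where the constant from Lemma~\ref{lem:local-curvature-estimate} must be tracked carefully.
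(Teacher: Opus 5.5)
Your spherical Harnack step (chaining Lemma~\ref{lem:local-curvature-estimate} over balls of radius $|x|/8$ and absorbing $\rho^{1/2}$ with the lower bound) is what the paper does. Both pointwise bounds, however, have genuine gaps.

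\textbf{Lower bound.} Your mechanism uses the mean-value inequality in the wrong direction. Superharmonicity of $h=u^{-1}$ gives $h(x_j)\ge\frac{1}{4\pi\delta^2}\int_{\partial B_\delta(x_j)}h\,d\sigma$. A huge value of $h$ at $x_j$ therefore says nothing about its averages nearby, and you cannot conclude that $u$ is small on a set of positive measure. Subharmonicity of $u$ has the same defect.

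Your alternative also fails. The compactness in Lemma~\ref{lem:local-curvature-estimate} rests on $\Delta^2Z_j\le\varepsilon_j\to0$ on a whole ball. Nothing in your setup controls $\Delta^2Z_j=A_j^{-1}u_j^{-7}$ near the point where $u_j$ is small, so there are no $W^{4,p}$ bounds and no limit $Z$. Moreover, for a $C^2$ limit with $Z\ge0$ and $Z(0)=0$, the Hessian argument by itself only gives $D^2Z(0)=0$. The contradiction there comes from the normalization $\Delta Z(0)=1$, which again needs $\Delta^2Z_j\to0$.

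The estimate is of blow-up type and needs two ingredients absent from your plan.
\begin{itemize}
\item \emph{Doubling.} Apply a point-selection lemma to $M=u^{-2}$. It yields $y_j$ with $|y_j|>2j\,u(y_j)^2$ and $u\ge2^{-1/2}u(y_j)$ on $B_{ju(y_j)^2}(y_j)$. Then $U_j(z)=u(y_j+u(y_j)^2z)/u(y_j)$, rescaled at the intrinsic scale $u(y_j)^2$ rather than at $|x_j|$, satisfies $U_j(0)=1$, $U_j\ge2^{-1/2}$, the equation and the curvature inequality on $B_j$. Now Lemma~\ref{lem:local-curvature-estimate} applies with bounded $F$. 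Elliptic estimates give $C^4_{\mathrm{loc}}$ convergence to an entire positive solution $U$ with $U\Delta U\ge2|\nabla U|^2$.
\item \emph{Liouville.} Superharmonicity of $U^{-1}$ gives $U\le C(1+|x|)$. The mean-value inequality for the nonnegative subharmonic function $W=\Delta U$, plus two integrations by parts against a cutoff, gives $W(x)\le C(1+|x|+R)R^{-2}\to0$. This contradicts $\Delta W=U^{-7}>0$.
\end{itemize}

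\textbf{Upper bound.} The ODE comparison cannot produce linear growth. The inequality $(\rho^2m')'\le C(m+\rho^{1/2})$ is satisfied by $m=\rho^\beta$ whenever $\beta(\beta+1)\le C$. Your $C$ absorbs $C_H$, the non-explicit constant of Lemma~\ref{lem:local-curvature-estimate}, and a factor $64$ from $r=|x_0|/8$. There is no reason for it to be at most $2$, the value corresponding to linear growth. At best you get a polynomial bound $m\lesssim\rho^\alpha$ with $\alpha(\alpha+1)\approx C$.

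The missing idea is that superharmonicity of $h=u^{-1}$, which you already observed, gives the bound directly. It needs no input from the lower bound or from Lemma~\ref{lem:local-curvature-estimate}. Let $a=\min_{\partial B_1}h>0$.
\begin{itemize}
\item On $B_1\setminus\overline{B_\varepsilon}$, compare $h$ with the harmonic function $a(1-\varepsilon/|x|)/(1-\varepsilon)$, using only $h>0$ on the inner sphere.
\item On $B_R\setminus\overline{B_1}$, compare $h$ with $a(|x|^{-1}-R^{-1})/(1-R^{-1})$, using only $h>0$ on the outer sphere.
\end{itemize}
Letting $\varepsilon\to0$ and $R\to\infty$ gives $h\ge a\min\{1,|x|^{-1}\}$, i.e.\ $u\le a^{-1}\max\{1,|x|\}$. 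The same comparison supplies the linear bound in the Liouville step above.
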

		\begin{proof}
By \eqref{eq:scalar-curvature-identity}, the curvature assumption is
equivalent to
\begin{equation}\label{eq:curvature-only-differential-inequality}
  u\Delta u\geq2|\nabla u|^2.
\end{equation}
Thus \(\Delta u\geq0\), and \(h=u^{-1}\) is superharmonic, since
\[
  \Delta h=u^{-3}\bigl(2|\nabla u|^2-u\Delta u\bigr)\leq0.
\]

\medskip
We divide the proof into four steps.

\medskip
\emph{Step 1: the upper bound.}
Let \(a=\min_{|x|=1}h(x)>0\). 
For \(0<\varepsilon<1\), consider the harmonic function
\[
    \varphi_\varepsilon(x)
    =
    a\,\frac{1-\varepsilon/|x|}{1-\varepsilon}
\]
on \(B_1\setminus\overline{B_\varepsilon}\). Then we have \(h\ge a=\varphi_\varepsilon\) on $|x|=1$, and 
\(
    h>0=\varphi_\varepsilon
\) on $|x|=\varepsilon$.
The minimum
principle and the limit
\(\varepsilon\rightarrow0\) therefore give \(h\geq a\) for
\(0<|x|\leq1\). Similarly, comparison on
\(B_R\setminus\overline{B_1}\) with
\[
  a\frac{|x|^{-1}-R^{-1}}{1-R^{-1}}
\]
is valid because $h\ge a$ on $|x|=1$ and $h>0$ on $|x|=R$.
Letting  \(R\to\infty\), we have  \(h(x)\geq a/|x|\) for
\(|x|\geq1\). Consequently,
\begin{equation}\label{eq:curvature-only-upper-bound}
  u(x)\leq a^{-1}\max\{1,|x|\}.
\end{equation}

\medskip
\emph{Step 2: a Liouville obstruction.}
We claim that there is no positive function \(U\in C^4(\mathbb R^3)\) satisfying
\begin{equation}\label{eq:curvature-only-entire-system}
  \Delta^2U=U^{-7},\qquad
  U\Delta U\geq2|\nabla U|^2
  \quad\text{in }\mathbb R^3.
\end{equation}
Suppose, to the contrary, that such a function exists. Applying the
comparison argument from Step~1 to \(U^{-1}\), with
\(a_0=\min_{|y|=1}U(y)^{-1}>0\), we obtain
\(U(y)\leq a_0^{-1}|y|\) for \(|y|\geq1\).
Since \(U\) is bounded on \(\overline{B_1}\), it follows that
\begin{equation*}
  U(x)\leq C(1+|x|).
\end{equation*}
Set \(W=\Delta U\). Then \(W\geq0\) and \(\Delta W=U^{-7}>0\).
Fix \(x\in\mathbb R^3\). For \(R\geq1\), choose
\(\chi_R\in C_c^\infty(B_{2R}(x))\) such that \(\chi_R=1\) on
\(B_R(x)\), \(0\leq\chi_R\leq1\), and
\(|\Delta\chi_R|\leq CR^{-2}\). The mean-value inequality for the
nonnegative subharmonic function \(W\), followed by two integrations by
parts, gives
\[
  \begin{aligned}
  0\leq W(x)
  &\leq \frac{C}{R^3}\int_{B_R(x)}W
   \leq \frac{C}{R^3}\int_{\mathbb R^3}\chi_R\Delta U\\
  &=\frac{C}{R^3}\int_{\mathbb R^3}U\Delta\chi_R
   \leq\frac{C(1+|x|+R)}{R^2}.
  \end{aligned}
\]
Letting \(R\to\infty\), we obtain \(W(x)=0\). Since \(x\) is arbitrary,
this contradicts \(\Delta W=U^{-7}>0\).

\medskip
\emph{Step 3: the lower bound.}
We first recall a standard doubling-selection argument. Let \(k>0\), and let
\(M>0\) be continuous on \(\mathbb R^3\setminus\{0\}\). If
\(|x_0|M(x_0)>2k\), then there exists \(y\) such that
\begin{equation}\label{eq:curvature-only-doubling}
  |y|M(y)>2k,\qquad
  M(z)\leq2M(y)
  \quad\text{whenever }|z-y|\leq\frac{k}{M(y)}.
\end{equation}
We prove the assertion by an iterative argument. Set \(x_0\) as above,
and suppose that \(x_n\) has been chosen so that
\[
    |x_n|M(x_n)>2k.
\]
In particular,
\[
    \frac{k}{M(x_n)}<\frac{|x_n|}{2},
\]
so the ball \(B_{k/M(x_n)}(x_n)\) does not contain the origin.

If
\[
    M(z)\le2M(x_n)
    \quad\text{for every }
    z\in\overline{B_{k/M(x_n)}(x_n)},
\]
then the iteration terminates, and we take \(y=x_n\). Otherwise, we
may choose \(x_{n+1}\) such that
\[
    |x_{n+1}-x_n|\le\frac{k}{M(x_n)}
    \qquad\text{and}\qquad
    M(x_{n+1})>2M(x_n).
\]
Since
\[
    |x_{n+1}|
    \ge |x_n|-|x_{n+1}-x_n|
    \ge |x_n|-\frac{k}{M(x_n)},
\]
we have 
\[
    \begin{aligned}
        |x_{n+1}|M(x_{n+1})
        &>
        2M(x_n)
        \left(|x_n|-\frac{k}{M(x_n)}\right)\\
        &=
        2\bigl(|x_n|M(x_n)-k\bigr)
        >2k.
    \end{aligned}
\]
Thus the induction hypothesis is preserved at every step.

Suppose that the iteration never terminates. Then
\[
    M(x_n)\geq2^nM(x_0)\qquad(n\geq0),
\]
and hence
\[
    |x_{n+1}-x_n|
    \le\frac{k}{M(x_n)}
    \leq\frac{k}{2^nM(x_0)}.
\]
It follows that
\[
    \sum_{n=0}^{\infty}|x_{n+1}-x_n|
    \le\frac{2k}{M(x_0)}
    <|x_0|.
\]
Therefore, \((x_n)\) is a Cauchy sequence and converges to some
\(x_\infty\in\mathbb R^3\). Moreover,
\[
    |x_\infty|
    \ge
    |x_0|-\sum_{n=0}^{\infty}|x_{n+1}-x_n|
    \ge
    |x_0|-\frac{2k}{M(x_0)}
    >0.
\]
Hence \(x_\infty\in\mathbb R^3\setminus\{0\}\). By continuity of \(M\),
\[
    M(x_n)\longrightarrow M(x_\infty)<\infty,
\]
which contradicts \(M(x_n)\geq2^nM(x_0)\to\infty\). Thus the iteration
must terminate after finitely many steps, and the terminal point
satisfies \eqref{eq:curvature-only-doubling}.

Suppose now that the desired lower bound fails and set \(M=u^{-2}\). Then
\(\sup_{x\ne0}|x|M(x)=\infty\). Applying
\eqref{eq:curvature-only-doubling} with \(k=j\), choose \(y_j\) such that
\[
    |y_j|M(y_j)>2j
\]
and
\[
    M(z)\le2M(y_j)
    \quad\text{whenever }
    |z-y_j|\le\frac{j}{M(y_j)}.
\]
Set \(\lambda_j=M(y_j)^{-1}=u(y_j)^2\).
Since \(|y_j|>2j\lambda_j\), the ball \(B_{j\lambda_j}(y_j)\)
does not contain the origin. Define
\[
  U_j(z)=\frac{u(y_j+\lambda_jz)}{u(y_j)},
  \qquad z\in B_j.
\]
The doubling property and the scaling invariance of the equation and
the curvature inequality imply
\begin{equation}\label{eq:curvature-only-rescaling}
  U_j(0)=1,\qquad U_j\geq2^{-1/2},\qquad
  \Delta^2U_j=U_j^{-7},\qquad
  U_j\Delta U_j\geq2|\nabla U_j|^2
  \quad\text{in }B_j.
\end{equation}
Fix \(R>0\) and take \(j>4R\). Applying
Lemma~\ref{lem:local-curvature-estimate} to
\(V(\xi)=U_j(R\xi)\), \(\xi\in B_4\), we obtain
\[
  \sup_{B_R}U_j\leq C_R,
\]
since  \(\Delta V(0)\geq0\) and
\(0<\Delta^2V\leq2^{7/2}R^4\). 
Together with the lower bound in \eqref{eq:curvature-only-rescaling},
these upper bounds allow us to apply interior \(W^{4,p}\)-estimates.
Fixing \(p>3\) and using Sobolev--Morrey embedding, we obtain uniform
\(C^{3,\alpha}\)-bounds on smaller compact sets, where \(\alpha=1-3/p\).
Since \(U_j\geq2^{-1/2}\), the nonlinear terms \(U_j^{-7}\) are uniformly
bounded in \(C^{3,\alpha}\) there. Interior Schauder estimates then yield uniform
\(C^{4,\alpha}\)-bounds on every fixed compact set. A diagonal
argument therefore yields, after passing to a subsequence,
\[
  U_j\longrightarrow U\qquad\text{in }C^4_{\mathrm{loc}}(\mathbb R^3).
\]
The limit \(U\)  is a positive entire solution of
\eqref{eq:curvature-only-entire-system}, contradicting the Liouville
obstruction. Hence
\begin{equation}\label{eq:curvature-only-lower-bound}
  u(x)\geq c|x|^{1/2}\qquad(x\ne0).
\end{equation}

\medskip
\emph{Step 4: the spherical Harnack inequality.}
For \(\rho>0\), set
\[
  u_\rho(y):=\rho^{-1/2}u(\rho y).
\]
This rescaling preserves the equation and
\eqref{eq:curvature-only-differential-inequality}. By
\eqref{eq:curvature-only-lower-bound}, \(u_\rho\geq c/\sqrt2\) on
\(1/2\leq|y|\leq3/2\). Choose \(\delta>0\) such that
\[
    B_{4\delta}(z)
    \subset
    \left\{y:\frac12<|y|<\frac32\right\}
    \qquad\text{for every }z\in\mathbb S^2.
\]
Fix \(z\in\mathbb S^2\) and set
\[
    V(\xi):=u_\rho(z+\delta\xi),
    \qquad \xi\in B_4.
\]
Applying Lemma~\ref{lem:local-curvature-estimate} to \(V\), we obtain
\begin{equation}\label{eq:curvature-only-local-harnack}
  \sup_{B_\delta(z)}u_\rho
  \leq C\left[u_\rho(z)+\delta^4
  \left(\frac{\sqrt2}{c}\right)^7\right]
  \leq C_*u_\rho(z),
\end{equation}
where the last inequality uses \(u_\rho(z)\geq c\), and \(C_*\) is
independent of \(\rho\) and \(z\).

Let \(z_0,z_N\in\mathbb S^2\) be points at which \(u_\rho\) attains
its minimum and maximum, respectively. Join \(z_0\) to \(z_N\) by a
minimizing great-circle arc. Since its length is at most \(\pi\), it
can be subdivided into a fixed number \(N=N(\delta)\) of subarcs with
endpoints
\(
    z_0,z_1,\ldots,z_N
\)
satisfying
\(
    |z_{k+1}-z_k|<\delta.
\)
Applying \eqref{eq:curvature-only-local-harnack}, we  get 
\[
    u_\rho(z_{k+1})
    \le C_*u_\rho(z_k),
    \qquad 0\le k<N.
\]
Hence
\[
    \sup_{\mathbb S^2}u_\rho
    =
    u_\rho(z_N)
    \le
    C_*^Nu_\rho(z_0)
    =
    C_*^N\inf_{\mathbb S^2}u_\rho.
\]
Since \(N\) is independent of \(\rho\), rescaling back yields
\[
    \sup_{\theta\in\mathbb S^2}u(r\theta)
    \le
    C_H\inf_{\theta\in\mathbb S^2}u(r\theta),
    \qquad
    C_H:=C_*^N.
\]
 Combining this estimate with
\eqref{eq:curvature-only-upper-bound} and
\eqref{eq:curvature-only-lower-bound}, we prove
\eqref{eq:curvature-only-estimates}.
\end{proof}
	
	\medskip
	
	The pointwise bounds and the spherical Harnack inequality provide control
of the size and angular oscillation of the solution on each sphere.
In cylindrical variables, this becomes uniform positivity of the profile
and comparability with its spherical mean, which form the starting point
for the radial symmetry argument below.

\begin{lem}\label{lem:radial-symmetry-vanishing-origin}
		Let \(u\in C^4(\mathbb R^3\setminus\{0\})\) be a positive solution of
		\eqref{eq1}. If the conformal metric \(g=u^{-4}|dx|^2\) has
		nonnegative scalar curvature, then \(u\) is radially symmetric about the
		origin.
		\end{lem}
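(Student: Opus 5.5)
We pass to cylindrical variables: set $t=\log|x|$, $\theta=x/|x|$, and
\[
v(t,\theta)=e^{-t/2}u(e^t\theta).
\]
Because the exponent is conformal, $v$ solves an autonomous fourth-order equation on $\RR\times\mathbb S^2$ of the form
\[
P(\partial_t,\Delta_{\mathbb S^2})\,v=v^{-7}.
\]
Here $P$ is the conjugated Paneitz operator in dimension three. It factors as a product of two second-order operators
\[
\bigl(-\partial_t^2-\Delta_{\mathbb S^2}+\tfrac14\bigr)\bigl(-\partial_t^2-\Delta_{\mathbb S^2}+\tfrac94\bigr)
\]
(up to sign conventions), and $v\equiv c_*$ is a constant solution.

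By Lemma~\ref{lem:curvature-only-estimates}, $v\ge c>0$ everywhere, $v\le C e^{-t/2}$ as $t\to-\infty$, and $v\le Ce^{t/2}$ as $t\to+\infty$. The spherical Harnack inequality makes $v(t,\cdot)$ comparable to its spherical mean. Interior estimates on the unit cylinder, applied to rescalings via Lemma~\ref{lem:local-curvature-estimate}, then bound the derivatives of $v$ up to order four by $C\,v$ locally.

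To prove radial symmetry, fix a Killing field $X$ on $\mathbb S^2$ (an infinitesimal rotation) and set $w=Xv$. Since the equation is rotation invariant, $w$ solves the linearized equation
\[
Pw=-7v^{-8}w.
\]
The potential $-7v^{-8}$ has a good sign relative to the positive operator $P$. I would multiply by $\eta_T(t)\,w$, where $\eta_T$ is a cutoff supported in $|t|\le 2T$, and integrate by parts on $\RR\times\mathbb S^2$. This yields
\[
\int \eta_T\Bigl(|\nabla^2 w|^2+\dots+\tfrac{9}{16}w^2\Bigr)+7\int\eta_T v^{-8}w^2=\text{error terms}.
\]
Here the left side is positive because the spectrum of $P$ is positive: each factor is at least $1/4$, resp.\ $9/4$, on the mode $\ell\ge1$ where $w$ lives. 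The error terms involve derivatives of $\eta_T$ multiplied by products of $w$ and its derivatives. The aim is to show they vanish as $T\to\infty$, which gives $w\equiv0$ for every rotation, so $u$ is radial.

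The main obstacle is controlling the error terms, since $v$ itself may grow like $e^{|t|/2}$ at both ends and the quantity $w^2$ is then not integrable. I would therefore use the Harnack inequality more carefully. Because $w=Xv$ is a derivative of $v$, and $v(t,\cdot)$ has bounded logarithmic oscillation, we get $|w|+|\nabla w|+\dots\le C\,\bar v(t)$, where $\bar v$ is the spherical mean. The mean $\bar v$ satisfies an ODE inequality, and I would show it is at most linear in the cylinder variable, or bounded, by exploiting $\Delta u\ge0$: the scalar curvature inequality gives that $u^{-1}$ is superharmonic, which controls the mean.

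If plain cutoffs fail because of the exponential growth, I would instead use the weight $\operatorname{sech}(t/2)$, or argue mode by mode. For the $\ell\ge1$ spherical-harmonic coefficients $w_\ell(t)$, the operator $P$ acts as $(-\partial_t^2+a_\ell)(-\partial_t^2+b_\ell)$ with $a_\ell\ge 9/4$. Its homogeneous solutions grow like $e^{\pm\frac32 t}$, strictly faster than the admissible $e^{|t|/2}$. A Fourier/ODE comparison then forces the angular part to vanish, provided the nonlinear coupling term is handled by the positivity of $7v^{-8}$. The critical estimate to verify is that the nonradial part of $v$ is $o(e^{3|t|/2})$ at both ends. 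This should follow from $v\le Ce^{|t|/2}$ combined with the Harnack inequality.
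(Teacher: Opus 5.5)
Your skeleton is exactly Step~3 of the paper's proof: differentiate along a Killing field $X$, use the linearized equation $P\phi+7V^{-8}\phi=0$ for $\phi=XV$, test against $\eta_R\phi$, and invoke positivity of $P$. But that cutoff argument closes only once $\phi$ is known to be bounded on the whole cylinder. The paper then bounds $\phi_t,\phi_{tt},\nabla_{\mathbb S^2}\phi$ and gets errors $O(1/R)$. Proving that boundedness is precisely the step you leave open, and your tools cannot supply it.

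\emph{Your proposed routes.} Superharmonicity of $u^{-1}$ gives, by comparison, only $u\le C\max\{1,|x|\}$, i.e.\ $\bar v\le Ce^{|t|/2}$, which you already had. It does not make $\bar v$ bounded or linear in $t$; for instance $u=|x|$ has $u^{-1}$ harmonic. Harnack plus interior estimates give $|XV|\le C\bar v\lesssim e^{|t|/2}$, which is exactly the critical rate.

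\emph{The mode analysis.} Your factorization is valid only on the radial mode. In fact $P=(\partial_t^2+\Delta_{\mathbb S^2}+\frac34)^2-4\partial_t^2$, whose exponents on the $\ell$-th spherical harmonic are $\pm(\ell+\frac32)$ and $\pm(\ell-\frac12)$. For $\ell=1$ these include $\pm\frac12$. The modes $e^{\pm t/2}Y_{1,k}$, corresponding to the biharmonic functions $x_i$ and $x_i/|x|$, lie in the kernel with exactly the admissible growth. So knowing that the nonradial part is $o(e^{3|t|/2})$ proves nothing. Likewise, a $\cosh(t/2)$-weighted argument as in Section~\ref{sec:rigidity} would need $XV=o(e^{|t|/2})$, not $O(e^{|t|/2})$.

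\emph{Why more input is unavoidable.} Every property your argument actually extracts also holds for a nonradial rupture solution: the equation, $V\ge c$, $\bar v\le Ce^{|t|/2}$, the spherical Harnack inequality, and hence $|XV|\le C\bar v$. Take $u=\ell|x|+b\cdot x+o(|x|^{1/2})$ at infinity with $0<|b|<\ell$, obtained by running the variational construction of Lemma~\ref{prop:variational-nonradial-counterexample} and Corollary~\ref{cor:radial-linear-growth} with this biharmonic profile in place of $\ell|x|$. So the curvature sign must be used again near infinity. The rupture hypothesis $u(x)\to0$, which you never invoke, must be used near the origin.

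The paper does exactly this, one end at a time.

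\emph{Negative end.} Averaging the equation gives $\mathcal Lm=F$ with $0<F\le c^{-7}$. Hence $m=G*F+A_+e^{3t/2}+E_+e^{t/2}+E_-e^{-t/2}+A_-e^{-3t/2}$ with $G*F$ bounded. The growth bound kills $A_\pm$, and rupture, in the form $e^{t/2}m(t)\to0$, kills $E_-$. So $V$ is bounded for $t\le0$, and interior estimates bound $\phi$ there.

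\emph{Positive end.} If $E_+>0$, then $u\asymp|x|$, so the Kelvin transform $\widehat u$ is bounded near $0$. It solves the same equation and satisfies $R_{\widehat g}\ge0$, hence $\Delta\widehat u\ge0$. B\^ocher's theorem then yields $\widehat u=v+\frac a2|y|$ with $v\in C^{1,\alpha}$ near $0$. This shows that the growing part $E_+e^{t/2}$ of $V$ is exactly radial, and that $\phi=r^{1/2}\nabla v(r\theta)\cdot B\theta=O(e^{-t/2})$ with $r=e^{-t}$. This cancellation is invisible to a Harnack inequality with a fixed constant, which cannot distinguish $E|x|$ from $E|x|+b\cdot x$ for small $|b|$.

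Only after $\phi\in L^\infty$ is established in this way does your energy argument go through.
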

		\begin{proof}
			We divide the proof into three steps.

			\emph{Step 1: cylindrical bounds and the behavior at the negative end.} Set $\mathcal C=\mathbb R\times\mathbb S^2$ and define
		\begin{equation*}
			V(t,\theta)=e^{-t/2}u(e^t\theta),\qquad
			m(t)=\frac1{4\pi}\int_{\mathbb S^2}V(t,\theta)\,d\sigma.
		\end{equation*}
			The Emden--Fowler transformation converts dilations in
\(\mathbb R^3\setminus\{0\}\) into translations on \(\mathcal C\) and
transforms the equation into
			\begin{equation}\label{eq:rs-zero-cylinder}
				PV=V^{-7},\qquad 
				P=\left(\partial_t^2+\Delta_{\mathbb S^2}+\frac34\right)^2
				-4\partial_t^2.
			\end{equation}
		 By Lemma~\ref{lem:curvature-only-estimates}, \(V\ge c>0\) on
\(\mathcal C\). Moreover, the spherical Harnack inequality and the
definition of \(m\) imply
			\begin{equation*}
				C_H^{-1}m(t)\leq V(t,\theta)\leq C_Hm(t).
			\end{equation*}
			The upper bound for \(u\) implies \(V(t,\theta)\leq C_0e^{|t|/2}\). Thus
			\begin{equation}\label{eq:rs-zero-mean-comparison}
				V\geq c,\qquad
				C_H^{-1}m(t)\leq V(t,\theta)\leq C_Hm(t),\qquad
			m(t)\leq C_0e^{|t|/2}.
		\end{equation}
		The spherical mean isolates the possible exponential modes at the two
		ends. Averaging \eqref{eq:rs-zero-cylinder} over \(\mathbb S^2\), we obtain
		\begin{equation*}
			\begin{aligned}
				m^{(4)}-\frac52m''+\frac9{16}m&=F(t),\\
				F(t):=\frac1{4\pi}\int_{\mathbb S^2}V(t,\theta)^{-7}\,d\sigma,
				&\qquad 0<F(t)\leq c^{-7}.
			\end{aligned}
		\end{equation*}
			Write $D=d/dt$ and
			\begin{equation*}
				\mathcal L=D^4-\frac52D^2+\frac9{16}
				=\left(D^2-\frac14\right)\left(D^2-\frac94\right).
			\end{equation*}
			The function
			\begin{equation*}
				G(t)=\frac12e^{-|t|/2}-\frac16e^{-3|t|/2},
			\end{equation*}
			satisfies \(\mathcal LG=\delta_0\) in \(\mathcal D'(\mathbb R)\). Since
			\(G,G',G''\in L^1(\mathbb R)\) and \(F\in L^\infty(\mathbb R)\), the
			convolution
			\begin{equation*}
				p(t)=(G*F)(t)
			\end{equation*}
			is well defined. Moreover, \(p^{(k)}=G^{(k)}*F\) for \(0\leq k\leq2\), \(\mathcal Lp=F\) in \(\mathcal D'(\mathbb R)\), and
			\begin{equation*}
				\sum_{k=0}^2\|p^{(k)}\|_{L^\infty(\mathbb R)}
				\leq C\|F\|_{L^\infty(\mathbb R)}.
			\end{equation*}
			Therefore \(\mathcal L(m-p)=0\) in \(\mathcal D'(\mathbb R)\).
Distributional solutions of this constant-coefficient ordinary differential
equation are smooth, and the kernel is spanned by \(e^{3t/2},e^{t/2},e^{-t/2},e^{-3t/2}\). Hence there exist constants \(A_\pm,E_\pm\) such that
			\begin{equation*}
				m(t)=p(t)+A_+e^{3t/2}+E_+e^{t/2}
				+E_-e^{-t/2}+A_-e^{-3t/2}.
			\end{equation*}
			The growth bound in \eqref{eq:rs-zero-mean-comparison}, together
with the boundedness of \(p\), forces
			\(A_+=A_-=0\). Since \(m>0\), we must also have \(E_+,E_-\geq0\); otherwise the corresponding exponential term would force \(m\) to
become negative at one of the two ends.
 Consequently, we obtain 
			\begin{equation}\label{eq:rs-zero-mean-expansions}
				\begin{aligned}
					 m(t)&=E_+e^{t/2}+O(1)
        &&\text{as }t\to+\infty,\\
        m(t)&=E_-e^{-t/2}+O(1)
        &&\text{as }t\to-\infty.
			\end{aligned}
		\end{equation}
			The rupture condition removes the growing mode at the negative end:
		\begin{equation}\label{eq:rs-zero-negative-coefficient}
			e^{t/2}m(t)
			=\frac1{4\pi}\int_{\mathbb S^2}u(e^t\theta)\,d\sigma
			\longrightarrow0
			\qquad (t\to-\infty).
		\end{equation}
			Combining \eqref{eq:rs-zero-mean-expansions} with
			\eqref{eq:rs-zero-negative-coefficient}, we get  \(E_-=0\). Hence \(m\),
			and therefore \(V\), is bounded on
			\((-\infty,0]\times\mathbb S^2\). Equivalently, there are
			\(r_0,C>0\) such that
			\[
				u(x)\leq C|x|^{1/2}\qquad(0<|x|<r_0).
			\]
			
\medskip
			\emph{Step 2: control of the positive end.}
			If \(E_+=0\), then \eqref{eq:rs-zero-mean-comparison} and
			\eqref{eq:rs-zero-mean-expansions} show that \(V\) is bounded on the
			whole cylinder. Suppose now that \(E_+=E>0\). Since
			\begin{equation*}
				m(t)=E e^{t/2}+O(1)
				\quad\text{as }t\to+\infty,
			\end{equation*}
			the comparison between \(V\) and \(m\) yields constants \(c_1,C_1,r_0>0\) such that
		\begin{equation}\label{eq:rs-zero-linear-comparison}
			c_1|x|\leq u(x)\leq C_1|x|
			\qquad (|x|\geq r_0).
		\end{equation}
			We shall prove that the sharper expansion
		\begin{equation}\label{eq:rs-zero-linear-expansion}
			u(x)=E|x|+O(1)
			\qquad (|x|\to\infty)
		\end{equation}
			holds uniformly in direction.

			 Define
			\(I(y)=y/|y|^2\) and the Kelvin transform
			\(\widehat u(y)=|y|u(I(y))\). By
			\eqref{eq:rs-zero-linear-comparison}, after decreasing
			\(\rho>0\) if necessary, we have 
		\begin{equation*}
			0<c_1\leq\widehat u(y)\leq C_1
			\qquad (0<|y|<\rho).
		\end{equation*}
			The biharmonic Kelvin transformation and the pullback identity give
		\begin{equation*}
			\Delta^2\widehat u=\widehat u^{-7},
			\qquad
			\widehat g:=\widehat u^{-4}|dy|^2=I^*g.
		\end{equation*}
			Thus \(R_{\widehat g}\geq0\), and the scalar-curvature identity implies
		\begin{equation*}
			q:=\Delta\widehat u\geq0
			\quad\text{in }B_\rho\setminus\{0\}.
		\end{equation*}

			Extend \(f:=\widehat u^{-7}\) arbitrarily to the origin. Then
			\(f\in L^\infty(B_\rho)\). Fix \(p_0>3\), and let
			\(\Psi\in W^{2,p_0}(B_\rho)\) solve
		\begin{equation*}
			\Delta\Psi=f\quad\text{in }B_\rho,
			\qquad
			\Psi=0\quad\text{on }\partial B_\rho.
		\end{equation*}
			By Sobolev embedding, \(\Psi\in
			C^{1,\alpha}(\overline{B_\rho})\) for some \(0<\alpha<1\).
			On the punctured ball, there holds
		\begin{equation*}
			\Delta(q-\Psi)=\Delta^2\widehat u-f=0.
		\end{equation*}
			Moreover, \(q-\Psi\) is bounded below because \(q\geq0\) and
			\(\Psi\) is bounded. Choose \(K>0\) so that \(q-\Psi+K>0\).
			B\^ocher's theorem
			\cite[Theorem~3.9]{AxlerBourdonRamey2001} implies that
		\[
			q-\Psi+K=\frac{a}{|y|}+H,
			\qquad a\geq0,
		\]
			where \(H\) extends harmonically across the origin. Consequently,
		\begin{equation*}
			q(y)=\frac{a}{|y|}+Q(y),
			\qquad a\geq0,
		\end{equation*}
			where \(Q=\Psi+H-K\) extends to a \(C^{1,\alpha}\) function near
			the origin. After decreasing \(\rho\) if necessary, we may assume
			that \(Q\in C^{1,\alpha}(\overline{B_\rho})\).

			Since \(\Delta|y|=2/|y|\) in
			\(\mathbb R^3\setminus\{0\}\), define 
		\begin{equation*}
			v(y):=\widehat u(y)-\frac{a}{2}|y|.
		\end{equation*}
			Then \(v\) is bounded on the punctured ball and
			\(\Delta v=Q\). Let
			\(z\in W^{2,p_0}(B_\rho)\cap W_0^{1,p_0}(B_\rho)\) solve
			\(\Delta z=Q\) in \(B_\rho\), with \(z=0\) on
			\(\partial B_\rho\). The function \(v-z\) is bounded
			and harmonic on \(B_\rho\setminus\{0\}\), so its singularity at
			the origin is removable by
			\cite[Theorem~2.3]{AxlerBourdonRamey2001}.
			Consequently, after decreasing \(\rho>0\)
			if necessary, \(v\) can be extended to a function in
			\(C^{1,\alpha}(B_\rho)\), and
		\begin{equation}\label{eq:rs-zero-kelvin-decomposition}
			\widehat u(y)=v(y)+\frac a2|y|
			\qquad (0<|y|<\rho).
		\end{equation}
			In particular, for some constant \(L\), we have
		\begin{equation}\label{eq:rs-zero-kelvin-first-order}
			\widehat u(y)=L+O(|y|)
			\qquad (y\to0),
		\end{equation}
			uniformly in direction.

			It remains to determine \(L\). By the definitions of
			\(\widehat u\), \(V\), and \(m\), we have 
		\begin{equation*}
			\frac1{4\pi}\int_{\mathbb S^2}
			\widehat u(s\theta)\,d\sigma
			=s^{1/2}m(-\log s)\longrightarrow E
			\qquad (s\rightarrow0).
		\end{equation*}
			The expansion \eqref{eq:rs-zero-kelvin-first-order} shows that
			the left-hand side tends to \(L\), and hence \(L=E\). Therefore
			\(\widehat u(y)=E+O(|y|)\), uniformly as \(y\to0\). Since
			\(|I(x)|=|x|^{-1}\) and
			\(u(x)=|x|\widehat u(I(x))\), this proves
			\eqref{eq:rs-zero-linear-expansion}. Consequently,
		\begin{equation*}
			V(t,\theta)=E e^{t/2}+O(e^{-t/2})
			\qquad (t\to+\infty),
		\end{equation*}
			uniformly in \(\theta\).

\medskip
\emph{Step 3: elimination of the angular dependence.}
We now prove that every rotational derivative of \(V\) vanishes.
We first make this derivative precise. Every one-parameter subgroup
of \(\mathrm{SO}(3)\) can be written as
\[
  \mathcal R_s=e^{sB},
  \qquad B^{\mathsf T}=-B,
\]
for a skew-symmetric matrix \(B\). Its infinitesimal generator on
\(\mathbb S^2\) is the vector field
\begin{equation*}
  X_B(\theta)
  :=\left.\frac{d}{ds}\right|_{s=0}\mathcal R_s\theta
  =B\theta,
\end{equation*}
and  $X_B$ is tangent to the sphere, because skew-symmetry yields
\[
  \theta\cdot X_B(\theta)=\theta\cdot B\theta=0.
\]
Moreover, the flow of \(X_B\) is precisely the family \(\mathcal R_s\), and
each \(\mathcal R_s\) is an isometry of \(\mathbb S^2\). Thus \(X_B\) is a
Killing vector field. For a smooth function \(f\) on \(\mathbb S^2\), its
derivative in this rotational direction is
\begin{equation*}
  X_Bf(\theta)
  :=\left.\frac{d}{ds}\right|_{s=0}f(\mathcal R_s\theta)
  =\nabla_{\mathbb S^2}f(\theta)\cdot B\theta.
\end{equation*}
Fix such a matrix \(B\), write \(X=X_B\), and let \(X\) act only on the
\(\theta\)-variable of functions on \(\mathcal C\). Define
\[
  \phi(t,\theta)
  =\left.\frac{d}{ds}\right|_{s=0}V(t,\mathcal R_s\theta)
  =XV(t,\theta).
\]
For every compact subset $K\Subset\mathcal C$, the positivity and 
continuity of $V$ imply that
\(
\inf_K V>0.
\)
Thus the singularity of the map $s\mapsto s^{-7}$ at $s=0$ does not arise 
locally. Standard interior elliptic regularity and
bootstrapping applied to \eqref{eq:rs-zero-cylinder} yield
\(
V\in C^\infty(\mathcal C).
\)
 Since rotations preserve the round metric, we have
\(X\Delta_{\mathbb S^2}f=\Delta_{\mathbb S^2}(Xf)\) and 
\(X\partial_t f=\partial_t(Xf)\). Hence \(X\) commutes with the 
cylindrical operator \(P\). Consequently, applying \(X\) to \eqref{eq:rs-zero-cylinder} and using
\(X(V^{-7})=-7V^{-8}XV\), we find that $\phi=XV$ satisfies
\begin{equation}\label{eq:linearized-equation}
  P\phi+7V^{-8}\phi=0
  \qquad\text{on }\mathcal C.
\end{equation}

We first establish the uniform bounds needed in the cutoff argument. For
\(\tau\in\RR\), set
\[
  Q_\tau=(\tau-2,\tau+2)\times\mathbb S^2,
  \qquad
  Q_\tau'=(\tau-1,\tau+1)\times\mathbb S^2.
\]
Fix \(p>3\). Since the product metric and the coefficients of \(P\) are
invariant under translations in \(t\), the interior estimate
\begin{equation}\label{eq:local-estimate-V}
  \|V\|_{W^{4,p}(Q_\tau')}
  \leq C_p\left(
    \|V^{-7}\|_{L^p(Q_\tau)}+\|V\|_{L^p(Q_\tau)}
  \right)
\end{equation}
holds with a constant \(C_p\) independent of \(\tau\). By Step~1 and
\eqref{eq:rs-zero-mean-comparison}, for all sufficiently negative
\(\tau\), we have
\[
    c\leq V\leq C
    \quad\text{on }Q_\tau.
\]
Consequently, by \eqref{eq:local-estimate-V} and the embedding
\(W^{4,p}(Q_\tau')\hookrightarrow C^{3,\beta}(Q_\tau')\), where
\(\beta=1-3/p>0\), 
the angular derivatives of \(V\) are uniformly bounded
as \(t\to-\infty\). 
Hence \(\phi=XV\) is bounded at the negative end. If
\(E_+=0\), Step~2 shows that \(V\) is bounded on the whole cylinder, and the
same argument bounds \(\phi\) at the positive end.

Suppose now that \(E_+>0\), and set \(r=e^{-t}\). For all sufficiently large
\(t\), it follows from \eqref{eq:rs-zero-kelvin-decomposition} that
\[
  V(t,\theta)
  =r^{-1/2}\widehat u(r\theta)
  =r^{-1/2}v(r\theta)+\frac a2r^{1/2}.
\]
The last term is independent of \(\theta\). Differentiating the first term
along the rotation \(\mathcal R_s\), we obtain
\[
  \phi(t,\theta)
  =r^{1/2}\nabla v(r\theta)\cdot B\theta.
\]
Since \(v\in C^{1,\alpha}(B_\rho)\) and \(B\theta\) is bounded on
\(\mathbb S^2\), we have 
\[
  |\phi(t,\theta)|\leq C_Br^{1/2}=C_Be^{-t/2}
  \qquad\text{as }t\to+\infty, 
\]
uniformly in \(\theta\). Combining the estimates at both ends with the
smoothness of \(\phi\) on compact subsets of \(\mathcal C\), we conclude that
\begin{equation}\label{eq:angular-bounded}
  \|\phi\|_{L^\infty(\mathcal C)}<\infty.
\end{equation}

Set \(q:=7V^{-8}\). By \eqref{eq:rs-zero-mean-comparison},
\(0<q\leq7c^{-8}\). Applying the interior estimate to
\(P\phi=-q\phi\) and using \eqref{eq:angular-bounded}, we obtain
\[
  \|\phi\|_{W^{4,p}(Q_\tau')}
  \leq C_p\left(
    \|q\phi\|_{L^p(Q_\tau)}+\|\phi\|_{L^p(Q_\tau)}
  \right)\leq C_B,
\]
uniformly in
\(\tau\).
We deduce from the Sobolev embedding that 
\begin{equation}\label{eq:uniform-derivative-bound}
  \sup_{\mathcal C}
  \bigl(
    |\phi|+|\phi_t|+|\phi_{tt}|+|\nabla_{\mathbb S^2}\phi|
  \bigr)<\infty.
\end{equation}

\medskip
With these bounds in hand, we derive a cutoff energy identity.
Set
\[
  \mathcal A=\Delta_{\mathbb S^2}+\frac34,
  \qquad
  P=\partial_t^4+2\mathcal A\partial_t^2+\mathcal A^2-4\partial_t^2.
\]
Choose \(\eta\in C_c^\infty(\RR)\) such that
\[
  0\leq\eta\leq1,
  \qquad
  \eta=1\ \text{on }[-1,1],
  \qquad
  \operatorname{supp}\eta\subset[-2,2],
\]
and define \(\eta_R(t)=\eta(t/R)\) for \(R\geq1\). Because \(\eta_R\) is
compactly supported, all integrations by parts below have no boundary terms at
\(t=\pm\infty\). We write \(\int\) for integration over \(\mathcal C\) with
respect to \(dt\,d\sigma\).

We compute separately the four contributions arising from \(P\).
Using the compact support of \(\eta_R\), the self-adjointness of
\(\mathcal A\) on \(\mathbb S^2\), and repeated integration by parts
in \(t\), we obtain
\begin{align}
  \int\eta_R\phi\,\phi_{tttt}
  &={}
  \int\eta_R\phi_{tt}^2
  +\int\eta_R''\bigl(\phi\phi_{tt}-\phi_t^2\bigr),
  \label{eq:ibp-one}
\end{align}
and
\begin{align}
  2\int\eta_R\phi\,\mathcal A\phi_{tt}
  &={}
  2\int\eta_R|\nabla_{\mathbb S^2}\phi_t|^2
  -\frac32\int\eta_R\phi_t^2
  +\int\eta_R''
    \left(-|\nabla_{\mathbb S^2}\phi|^2+\frac34\phi^2\right),
  \label{eq:ibp-two}
\end{align}
The remaining two terms satisfy
\begin{align}
  -4\int\eta_R\phi\phi_{tt}
  &={}
  4\int\eta_R\phi_t^2-2\int\eta_R''\phi^2,
  \label{eq:ibp-three}\\
  \int\eta_R\phi\,\mathcal A^2\phi
  &={}
  \int\eta_R|\mathcal A\phi|^2.
  \label{eq:ibp-four}
\end{align}
We have also used the identity
\[
    \int_{\mathbb S^2}
    f\Delta_{\mathbb S^2}g\,d\sigma
    =
    -\int_{\mathbb S^2}
    \nabla_{\mathbb S^2}f\cdot
    \nabla_{\mathbb S^2}g\,d\sigma.
\]

Multiplying \eqref{eq:linearized-equation} by \(\eta_R\phi\), integrating,
and using \eqref{eq:ibp-one}--\eqref{eq:ibp-four}, we obtain the exact identity
\begin{equation}\label{eq:coercive-identity}
  \begin{aligned}
  \mathcal E_R+7\int\eta_RV^{-8}\phi^2
  =-\int\eta_R''\biggl(
    \phi\phi_{tt}-\phi_t^2-\frac54\phi^2
    -|\nabla_{\mathbb S^2}\phi|^2
  \biggr),
  \end{aligned}
\end{equation}
where
\begin{equation}\label{eq:energy}
  \mathcal E_R=
  \int\eta_R\left(
    \phi_{tt}^2+2|\nabla_{\mathbb S^2}\phi_t|^2
    +\frac52\phi_t^2+|\mathcal A\phi|^2
  \right).
\end{equation}
Every term on the left-hand side of \eqref{eq:coercive-identity} is
nonnegative. Taking the absolute value of the right-hand side and using
\eqref{eq:uniform-derivative-bound}, we obtain
\[
  \begin{aligned}
  0\leq \mathcal E_R
  &\leq \mathcal E_R+7\int\eta_RV^{-8}\phi^2\\
  &\leq C_B\int_{\RR}|\eta_R''(t)|\,dt.
  \end{aligned}
\]
Moreover,
\[
  \|\eta_R''\|_{L^1(\RR)}
  =R^{-1}\|\eta''\|_{L^1(\RR)}.
\]
Consequently,
\begin{equation}\label{eq:energy-decay}
  0\leq \mathcal E_R\leq\frac{C_B}{R},
\end{equation}
where \(C_B\) is independent of \(R\).

\medskip
We now combine the spectral gap of \(\mathcal A\) with the energy estimate
to prove that
$
    \phi\equiv0
  $ \text{on } $\mathcal C=\mathbb R\times\mathbb S^2$.
Let \(\{Y_{\ell,k}\}\) be an orthonormal basis of spherical harmonics, with
\[
  -\Delta_{\mathbb S^2}Y_{\ell,k}
  =\ell(\ell+1)Y_{\ell,k}.
\]
If \(\psi=\sum_{\ell,k}c_{\ell,k}Y_{\ell,k}\), then
\[
  \|\mathcal A\psi\|_{L^2(\mathbb S^2)}^2
  =\sum_{\ell,k}
   \left(\frac34-\ell(\ell+1)\right)^2|c_{\ell,k}|^2.
\]
Since
\(\left|3/4-\ell(\ell+1)\right|\geq3/4\) for every integer
\(\ell\geq0\), using Parseval's identity, we have 
\begin{equation}\label{eq:spectral-gap}
  \int_{\mathbb S^2}|\mathcal A\psi|^2\,d\sigma
  \geq\frac9{16}
  \int_{\mathbb S^2}|\psi|^2\,d\sigma
\end{equation}
for every smooth \(\psi\) on \(\mathbb S^2\).

Fix \(L>0\). For every \(R>L\), one has \(\eta_R=1\) on \([-L,L]\).
Applying \eqref{eq:spectral-gap} to \(\phi(t,\cdot)\) for each
\(t\in[-L,L]\), and then using \eqref{eq:energy} and
\eqref{eq:energy-decay}, we find
\[
  0\leq
  \frac9{16}
  \int_{[-L,L]\times\mathbb S^2}\phi^2\,dt\,d\sigma
  \leq \mathcal E_R
  \leq\frac{C_B}{R}.
\]
Letting \(R\to\infty\), we obtain
\[
  \int_{[-L,L]\times\mathbb S^2}\phi^2\,dt\,d\sigma=0.
\]
Since \(\phi\) is smooth, it follows that \(\phi\equiv0\) on
\([-L,L]\times\mathbb S^2\). As \(L>0\) is arbitrary, we deduce that 
\[
  X_BV\equiv0\qquad\text{on }\mathcal C.
\]

Finally, the skew-symmetric matrix \(B\) was arbitrary. Hence
\(XV=0\) for every rotational vector field \(X\) on
\(\mathbb S^2\). Such vector fields can be written as
\[
    X_a(\theta)=a\times\theta,
    \qquad a\in\mathbb R^3.
\]
If
\(\xi\in T_\theta\mathbb S^2\), choosing
\(
    a=\theta\times\xi
\)
and using \(\theta\cdot\xi=0\), we obtain
\[
    X_a(\theta)
    =
    (\theta\times\xi)\times\theta
    =
    \xi.
\]
Thus the rotational directions span \(T_\theta\mathbb S^2\) at
every \(\theta\in\mathbb S^2\), and therefore 
\[
    \nabla_{\mathbb S^2}V\equiv0
    \quad\text{on }\mathcal C,
\]
so \(V\) is independent of \(\theta\). Recalling that
\[
    u(r\theta)=r^{1/2}V(\log r,\theta),
\]
we conclude that \(u\) is radially symmetric about the origin.

		\end{proof}
	
	\medskip
	
	Radial symmetry removes the angular dependence from the cylindrical
equation, leaving an autonomous fourth-order ODE. The bounds already
established at the negative end can now be refined to determine the precise
asymptotic profile at the puncture.

\begin{lem}\label{lem:radial-origin-profile}
		Let \(u\in C^4(\mathbb R^3\setminus\{0\})\) be a positive solution of
		\eqref{eq1}. Assume that \(g=u^{-4}|dx|^2\) has nonnegative scalar
		curvature. By
		Lemma~\ref{lem:radial-symmetry-vanishing-origin}, write \(u=u(r)\). Then
		\begin{equation}\label{eq:radial-origin-profile}
			\lim_{r\downarrow0}\frac{u(r)}{\sqrt r}=c_*,
			\qquad c_*=\left(\frac43\right)^{1/4}.
		\end{equation}
	\end{lem}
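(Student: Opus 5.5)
The plan is to pass to the cylindrical profile, extract limits of translates at the negative end, and show that every such limit is constant by reusing the coercive cutoff identity from Lemma~\ref{lem:radial-symmetry-vanishing-origin}, now applied to the $t$-derivative instead of a rotational derivative. By Lemma~\ref{lem:radial-symmetry-vanishing-origin}, $V(t)=e^{-t/2}u(e^t)$ depends only on $t$. Equation \eqref{eq:rs-zero-cylinder} then reduces to the autonomous ODE
\[
V''''-\frac52V''+\frac9{16}V=V^{-7}\qquad\text{on }\RR .
\]
Step~1 of the proof of Lemma~\ref{lem:radial-symmetry-vanishing-origin} (with $E_-=0$), together with Lemma~\ref{lem:curvature-only-estimates}, gives $c\le V\le C$ on $(-\infty,0]$. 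The translation-invariant interior estimate \eqref{eq:local-estimate-V} and bootstrapping then give uniform $C^k$ bounds on $V$ over $(-\infty,-2]$ for every $k$.

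Next I take an arbitrary sequence $s_k\to-\infty$ and consider the translates $V_k(t)=V(t+s_k)$. They satisfy the same ODE, and by the uniform bounds and Arzel\`a--Ascoli a subsequence converges in $C^4_{\mathrm{loc}}(\RR)$ to some $W$. This limit solves the same ODE on all of $\RR$, satisfies $c\le W\le C$, and has all derivatives bounded on $\RR$. The goal is to show $W\equiv c_*$. Once that holds, the limit is independent of the sequence, so $V(t)\to c_*$ as $t\to-\infty$, which is exactly \eqref{eq:radial-origin-profile}.

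To prove $W$ is constant, set $\phi=W'$. Differentiating the ODE gives
\[
P\phi+7W^{-8}\phi=0,
\]
where $P$ acts on $\theta$-independent functions, so $\mathcal A=3/4$. This is precisely \eqref{eq:linearized-equation}, with the translational generator $\partial_t$ in place of the rotational field $X$. Moreover $\phi,\phi_t,\phi_{tt}$ are bounded on $\RR$. I then repeat the cutoff computation verbatim: multiplying by $\eta_R\phi$ yields \eqref{eq:coercive-identity}, whose left side is nonnegative because $7W^{-8}>0$. The right side is $O(\|\eta_R''\|_{L^1})=O(R^{-1})$. Since $|\mathcal A\phi|^2=\frac9{16}\phi^2$, this gives $\int_{-L}^{L}\phi^2\,dt=0$ for every $L$, hence $W'\equiv0$. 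A constant solution must satisfy $\frac9{16}W=W^{-7}$, so $W^8=16/9$ and $W=(4/3)^{1/4}=c_*$.

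The main obstacle I expect is making sure the coercive identity really applies on the whole line to $\phi=W'$. In Lemma~\ref{lem:radial-symmetry-vanishing-origin}, boundedness of $\phi$ at both ends required separate arguments (the Kelvin analysis at $+\infty$). Here that difficulty is avoided because $W$ is a limit of translates taken at the negative end, where $V$ and all its derivatives are uniformly bounded. Consequently $W$ is bounded with bounded derivatives on all of $\RR$, and no information about the positive end of $V$ is needed.

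Completeness of $g$ at the origin then follows directly. Along any curve approaching $0$ we have $u^{-2}\ge c|x|^{-1}$ near $0$, so the $g$-length is at least $c\int d|x|/|x|=\infty$.
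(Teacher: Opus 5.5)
Your proposal is correct and follows essentially the same route as the paper: two-sided bounds on the negative end, $C^4_{\mathrm{loc}}$ compactness of translates, and rigidity of entire solutions bounded above and away from zero, proved by the same cutoff energy identity with a positive zeroth-order coefficient. The only cosmetic difference is that you apply the identity to the infinitesimal translate $\phi=W'$ with coefficient $7W^{-8}$, as in the rotational step of Lemma~\ref{lem:radial-symmetry-vanishing-origin}, whereas the paper applies it to the finite difference $z(\cdot+\tau)-z$ with the mean-value coefficient $a_\tau$.
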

	\begin{proof}
	Set
	\[
		v(t)=e^{-t/2}u(e^t),\qquad t\in\mathbb R.
	\]
	Then the cylindrical equation \eqref{eq:rs-zero-cylinder} reduces to
	\begin{equation}\label{eq:radial-origin-profile-ode}
		\mathcal Lv:=v^{(4)}-\frac52v''+\frac9{16}v=v^{-7}
		\qquad\text{on }\mathbb R.
	\end{equation}
	Lemma~\ref{lem:curvature-only-estimates} provides a uniform positive
	lower bound for \(v\), while Step~1 of the proof of
	Lemma~\ref{lem:radial-symmetry-vanishing-origin} establishes its boundedness
	on \((-\infty,0]\). Thus there exist constants \(c,C>0\) such that
	\begin{equation}\label{eq:radial-origin-profile-bounds}
		0<c\leq v(t)\leq C\qquad(t\leq0).
	\end{equation}

	We first show that every entire solution of
	\eqref{eq:radial-origin-profile-ode} that is bounded above and uniformly
	away from zero must equal \(c_*\). Suppose that
	\(z\in C^4(\mathbb R)\) solves
	\eqref{eq:radial-origin-profile-ode} and satisfies
	\begin{equation*}
		0<c_0\leq z(t)\leq C_0\qquad(t\in\mathbb R).
	\end{equation*}
	Since \(z\) and \(z^{-7}\) are uniformly bounded, interior estimates
	for \(\mathcal L\), applied on intervals of fixed length, imply that
	\(z'\) and \(z''\) are uniformly bounded on \(\mathbb R\).

	Fix \(\tau\in\mathbb R\) and set \(w(t)=z(t+\tau)-z(t)\).
	Subtracting the equations for \(z(t+\tau)\) and \(z(t)\), and applying
	the fundamental theorem of calculus to the nonlinearity, we obtain
	\begin{equation}\label{eq:radial-origin-translation-equation}
		w^{(4)}-\frac52w''
		+\left(\frac9{16}+a_\tau(t)\right)w=0,
	\end{equation}
	where
	\[
		a_\tau(t)
		=7\int_0^1
		\bigl((1-s)z(t)+sz(t+\tau)\bigr)^{-8}\,ds>0.
	\]
	Choose \(0\leq\eta\leq1\) in \(C_c^\infty(\mathbb R)\), with
	\(\eta=1\) on \([-1,1]\) and
	\(\operatorname{supp}\eta\subset[-2,2]\), and set
	\(\eta_R(t)=\eta(t/R)\). Multiplying
	\eqref{eq:radial-origin-translation-equation} by \(\eta_Rw\) and
	integrating by parts, we obtain
	\begin{equation*}
		\begin{aligned}
			&\int_{\mathbb R}\eta_R
			\left[(w'')^2+\frac52(w')^2+
			\left(\frac9{16}+a_\tau\right)w^2\right]dt\\
			&\qquad
			=-\int_{\mathbb R}\eta_R''
			\left(ww''-(w')^2-\frac54w^2\right)dt.
		\end{aligned}
	\end{equation*}
	The uniform bounds for \(z,z',z''\) also bound \(w,w',w''\).
	Consequently, the absolute value of the right-hand side is at most
	\(C_\tau R^{-1}\), since
	\[
		\|\eta_R''\|_{L^1(\mathbb R)}
		=R^{-1}\|\eta''\|_{L^1(\mathbb R)}.
	\]
	Because \(\eta_R=1\) on \([-R,R]\) and all terms on the left-hand side
	are nonnegative, for every fixed \(L>0\) and \(R>L\) we have
	\[
		\frac9{16}\int_{-L}^{L}w^2\,dt\leq\frac{C_\tau}{R}.
	\]
	Letting \(R\to\infty\), we conclude that \(w\equiv0\) by continuity.
Thus \(z(t+\tau)=z(t)\) for all \(t\in\mathbb R\).
Since \(\tau\in\mathbb R\) was arbitrary, \(z\) is constant.
Substituting this constant into \eqref{eq:radial-origin-profile-ode},
we obtain \(\frac9{16}z=z^{-7}\). Since \(z>0\), it follows that
\[
    z
    =\left(\frac43\right)^{1/4}=c_*.
\]

	We now determine the limit of \(v\) as \(t\to-\infty\).
	Let \(t_j\to-\infty\) be arbitrary and set \(v_j(s)=v(t_j+s)\).
	For each fixed \(R>0\), we have \(t_j+2R\leq0\) for all sufficiently
	large \(j\), and hence \eqref{eq:radial-origin-profile-bounds} implies
	\[
		c\leq v_j(s)\leq C\qquad(s\in[-2R,2R]).
	\]
	Each \(v_j\) satisfies the same autonomous equation.
	Interior \(W^{4,2}\)-estimates and the one-dimensional Sobolev embedding
	therefore yield uniform \(C^{3,1/2}\)-bounds on smaller intervals.
	Using the equation and the positive lower bound, we then obtain uniform
	\(C^{4,1/2}\)-bounds on compact intervals.

	By the Arzel\`a--Ascoli theorem and a diagonal argument, a subsequence,
	still denoted by \(v_j\), satisfies
	\[
		v_j\longrightarrow z
		\qquad\text{in }C^4_{\mathrm{loc}}(\mathbb R),
	\]
	Passing to the limit in the equation and the bounds, we find that
	\[
		\mathcal Lz=z^{-7},
		\qquad c\leq z\leq C
		\quad\text{on }\mathbb R.
	\]
	The rigidity statement proved above implies that \(z\equiv c_*\).

	If \(v(t)\) did not converge to \(c_*\) as \(t\to-\infty\), one could
	choose \(t_j\to-\infty\) and \(\varepsilon>0\) with
	\(|v(t_j)-c_*|\geq\varepsilon\). The preceding compactness argument would
	produce a subsequence for which \(v(t_j+\cdot)\to c_*\) locally uniformly,
	contradicting the inequality at \(s=0\). Therefore
	\[
		\lim_{t\to-\infty}v(t)=c_*.
	\]
	Since \(r=e^t\) and \(v(t)=u(r)/\sqrt r\), this is precisely
	\eqref{eq:radial-origin-profile}.
\end{proof}
	
	\begin{proof}[Proof of Theorem~\ref{thm1}]
		By Lemma~\ref{lem:radial-symmetry-vanishing-origin}, \(u\) is radially
		symmetric about the origin. Lemma~\ref{lem:radial-origin-profile} then yields
		\begin{equation*}
			\lim_{r\downarrow0}\frac{u(r)}{\sqrt r}
			=\left(\frac43\right)^{1/4},
		\end{equation*}
		which is \eqref{eq:main-origin-asymptotic}.

		It remains to prove completeness at the puncture. The preceding limit
		implies that there are \(r_0,C>0\) such that
		\begin{equation*}
			u(x)\leq C|x|^{1/2}
			\qquad(0<|x|<r_0).
		\end{equation*}
		Let \(s_0\in(0,\infty]\), and let
		\(\gamma:[0,s_0)\to\mathbb R^3\setminus\{0\}\) be a locally piecewise
		\(C^1\) curve such that \(|\gamma(s)|\to0\) as \(s\uparrow s_0\).
		Choose \(s_1\in[0,s_0)\) so that
		\(0<|\gamma(s)|<r_0\) for every \(s\in[s_1,s_0)\). The line element of
		\(g=u^{-4}|dx|^2\) is \(ds_g=u^{-2}|dx|\); hence, for
		\(s\in(s_1,s_0)\),
		\begin{equation*}
			\begin{aligned}
				L_g\bigl(\gamma|_{[s_1,s]}\bigr)
				&=\int_{s_1}^{s}u(\gamma(\tau))^{-2}|\gamma'(\tau)|\,d\tau\\
				&\geq C^{-2}\int_{s_1}^{s}
				\frac{|\gamma'(\tau)|}{|\gamma(\tau)|}\,d\tau\\
				&\geq C^{-2}\int_{s_1}^{s}
				\left|\frac{d}{d\tau}\log|\gamma(\tau)|\right|\,d\tau\\
				&\geq C^{-2}
				\left|\log|\gamma(s)|-\log|\gamma(s_1)|\right|.
			\end{aligned}
		\end{equation*}
		Here we used
		\[
			\left|\frac{d}{d\tau}\log|\gamma(\tau)|\right|
			=\frac{|\gamma(\tau)\cdot\gamma'(\tau)|}{|\gamma(\tau)|^2}
			\leq\frac{|\gamma'(\tau)|}{|\gamma(\tau)|}
		\]
		at every differentiability point of \(\gamma\).
		Since \(\log|\gamma(s)|\to-\infty\) as \(s\uparrow s_0\), every such
		curve has infinite \(g\)-length. Thus \(g\) is complete at the origin.
	\end{proof}
	
\section{Rigidity under sublinear growth}\label{sec:rigidity}

In this section, we prove Theorem~\ref{thm2} through a weighted uniqueness argument for the corresponding equation on the cylinder. Set \(\mathcal C=\mathbb R\times\mathbb S^2\) and define
\begin{equation}\label{eq:apsc-two-solution-operator}
	P=\partial_t^4+
	\left(2\Delta_{\mathbb S^2}-\frac52\right)\partial_t^2+
	\left(\Delta_{\mathbb S^2}+\frac34\right)^2.
\end{equation}
Under the Emden--Fowler transformation, Euclidean dilations correspond to translations in \(t\), while rotations act on the spherical variable. The following lemma establishes uniqueness for positive cylindrical solutions whose difference is \(o(\cosh(t/2))\) at both ends. The rupture condition at the origin and the sublinear growth assumption at infinity provide the required decay at the two ends of the cylinder. The uniqueness lemma can therefore be applied to the cylindrical solution and each of its translates and rotations.

\begin{lem}\label{lem:apsc-two-solution-uniqueness}
	Suppose \(V_1,V_2\in C^4(\mathcal C)\) are positive solutions of
	\[
	PV_i=V_i^{-7}\qquad\text{on }\mathcal C.
	\]
	If
	\begin{equation}\label{eq:apsc-two-solution-decay}
		\lim_{t\to\pm\infty}
		\sup_{\theta\in\mathbb S^2}
		\frac{|V_1(t,\theta)-V_2(t,\theta)|}{\cosh(t/2)}=0,
	\end{equation}
	then \(V_1\equiv V_2\).
\end{lem}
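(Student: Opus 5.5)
The plan is to study the difference \(w:=V_1-V_2\). Exactly as in the proof of Lemma~\ref{lem:radial-origin-profile}, the fundamental theorem of calculus turns the two equations into the linear equation
\[
Pw+a\,w=0\quad\text{on }\mathcal C,\qquad
a(t,\theta)=7\int_0^1\bigl((1-s)V_2+sV_1\bigr)^{-8}\,ds>0 .
\]
The only information at the ends is the hypothesis \eqref{eq:apsc-two-solution-decay}, namely \(w=o(\cosh(t/2))\). The rate \(1/2\) is exactly the borderline. On the spherical harmonic of degree \(\ell\), the operator \(P\) factors as
\[
\bigl(\partial_t^2-(\ell-\tfrac12)^2\bigr)\bigl(\partial_t^2-(\ell+\tfrac32)^2\bigr).
\]
So every kernel mode of \(P\) grows at least like \(e^{|t|/2}\) at one of the two ends, and \(\ell=0,1\) realize the rate \(1/2\) exactly. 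Hence \(o(\cosh(t/2))\) is precisely the condition that excludes all homogeneous modes. The idea is to turn this fact into a coercive weighted identity.

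\emph{Step 1: conjugation.} Write \(w=\cosh(t/2)\,\psi\), so that \(\psi\to0\) uniformly as \(t\to\pm\infty\). I would compute the conjugated operator
\[
P_h\psi:=\cosh(t/2)^{-1}\,P\bigl(\cosh(t/2)\,\psi\bigr),
\]
using \(\partial_t^2\cosh(t/2)=\tfrac14\cosh(t/2)\) and \(\tanh'=\tfrac12\operatorname{sech}^2\). The coefficients of \(P_h\) are bounded, and its zeroth-order part becomes \(P\) applied to \(\cosh(t/2)\), divided by \(\cosh(t/2)\). Per mode this equals
\[
\bigl(\tfrac14-(\ell-\tfrac12)^2\bigr)\bigl(\tfrac14-(\ell+\tfrac32)^2\bigr)\ge0,
\]
and it vanishes for \(\ell=0,1\). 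The strictly positive contribution must therefore come from the \(\operatorname{sech}^2\) terms together with \(a>0\). This is the "strictly positive zeroth-order term" promised in the paper.

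\emph{Step 2: the weighted identity.} I would multiply \(P_h\psi+a\psi=0\) by \(\eta_R\psi\), with \(\eta_R\) the same cutoff as before. After integrating by parts in \(t\) and on \(\mathbb S^2\), as in \eqref{eq:ibp-one}--\eqref{eq:ibp-four}, I aim for
\[
\int\eta_R\Bigl(|\partial_t^2\psi|^2+\dots+c\,\psi^2+a\,\psi^2\Bigr)
=\int O\bigl(|\eta_R'|+|\eta_R''|+|\eta_R'''|+|\eta_R''''|\bigr)\bigl(\psi^2+|\nabla\psi|^2+|\nabla^2\psi|^2\bigr),
\]
where \(c>0\) and the left-hand side is nonnegative. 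The odd-order terms created by conjugation, such as \(\tanh(t/2)\,\psi\,\partial_t^3\psi\), integrate by parts into \(\operatorname{sech}^2\)-weighted squares. I expect them to combine, mode by mode and using the spectral structure of \(\mathcal A=\Delta_{\mathbb S^2}+\frac34\), into nonnegative quantities. Positivity of the \(\psi^2\) coefficient is the algebraic point to check.

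\emph{Step 3: the boundary terms.} The derivatives of \(\eta_R\) are supported where \(R\le|t|\le2R\) and have size \(O(R^{-1})\) in \(L^1\). It therefore suffices that \(\psi\) and its derivatives up to order two tend to \(0\) at both ends, since then the right-hand side is \(o(1)\). Letting \(R\to\infty\) then forces \(\int\psi^2=0\), so \(w\equiv0\).

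\emph{Main obstacle.} The hard step is obtaining derivative control of \(\psi\) from the purely \(C^0\) hypothesis. Interior \(W^{4,p}\) estimates on \(Q_\tau\), as in \eqref{eq:local-estimate-V}, require a bound on \(a\) there, but \(V_i\) need not be bounded below uniformly at the ends. I would first try to avoid this by using the sign of \(a\). Since \(aw=V_2^{-7}-V_1^{-7}\) has the sign of \(w\), I would work only with \(\int\eta_R\,\psi\,P_h\psi\le0\). I would then move every derivative onto \(\eta_R\) and the weight, leaving boundary terms of the form \(\int|\eta_R^{(k)}|\,\psi^2\) together with lower-order terms absorbed into the left-hand side. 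If this absorption fails, the fallback is to apply local estimates to \(V_i\) rescaled on the annuli where \(\eta_R'\neq0\), which yields derivative bounds of size \(o(\cosh(t/2))\) for \(w\).
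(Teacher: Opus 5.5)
Your route is the paper's: write $V_1-V_2=\cosh(t/2)\,Z$ (your $\psi$), conjugate $P$ by $h=\cosh(t/2)$, test $\mathcal BZ+aZ=0$ against $\eta_RZ$, and let $R\to\infty$. Your Step~2 expectation is correct, though you leave it unchecked. The paper obtains exactly
\[
Q_\eta[Z]=\int\eta\bigl[Z_{tt}^2+2|\nabla_{\mathbb S^2}Z_t|^2+(1+3T')Z_t^2+(\Delta_{\mathbb S^2}Z)^2-2|\nabla_{\mathbb S^2}Z|^2+T'|\nabla_{\mathbb S^2}Z|^2+(T'-T''')Z^2\bigr],
\]
where $T=\tanh(t/2)$ and $T'-T'''=\frac34\operatorname{sech}^4(t/2)$, and $\lambda_\ell(\lambda_\ell-2)\ge0$ handles the spherical part. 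So the $Z^2$ coefficient is positive, though it is not a positive constant $c$.

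\textbf{The gap is in Step~3.} The hypothesis gives only $\sup_\theta|Z|\to0$, while the remainder contains $\eta_R''ZZ_{tt}$, $\eta_R'T\,Z_t^2$ and $\eta_R'T|\nabla_{\mathbb S^2}Z|^2$. The conjugated operator has odd-order $t$-derivatives, unlike $P$ in Lemma~\ref{lem:radial-symmetry-vanishing-origin}. Hence $\eta_R'$ appears, and $\|\eta_R'\|_{L^1}$ is of order one, not $O(R^{-1})$. You therefore need genuine smallness of the second-order energy on $\{R\le|t|\le2R\}$, not mere boundedness.

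Your fallback cannot supply this. Rescaling on Euclidean annuli is just translation in $t$. Interior $W^{4,p}$ or Schauder estimates for $V_i$ or $V_1-V_2$ need bounds on $V_i^{-7}$, i.e.\ on $a$. Nothing bounds $V_i$ from below: in the application, $u$ vanishes at the origin and is only sublinear at infinity.

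Your first option, as phrased ("move every derivative onto $\eta_R$"), also cannot work literally. A term like $\eta_R''ZZ_{tt}$ cannot be reduced to $\int|\eta_R^{(k)}|Z^2$; it must be absorbed. Absorbing it into the global form $Q_\eta$ is awkward, for two reasons. First, $Q_\eta$ controls $|\nabla_{\mathbb S^2}Z|^2$ near the ends only through $\lambda_\ell(\lambda_\ell-2)$, which vanishes for $\ell\le1$. Second, $|\eta_R'|$ is not dominated by $\eta_R$ near the edge of the support.

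\textbf{The missing ingredient is a local energy estimate that is uniform in $a$ precisely because $a>0$.} The paper tests $\mathcal BZ+aZ=0$ against $\zeta^4Z$, with $\zeta$ a unit-scale cutoff around $t=s$. It writes
\[
\mathcal B=\mathcal D^2+2T\partial_t\mathcal D-\partial_t^2+2\Delta_{\mathbb S^2}-2T\partial_t,\qquad \mathcal D=\partial_t^2+\Delta_{\mathbb S^2},
\]
and absorbs all cutoff errors into $\int\zeta^4|\mathcal DZ|^2$ and $\int\zeta^2|\nabla_{\mathcal C}Z|^2$. The potential term $\int a\zeta^4Z^2$ stays on the good side and needs no upper bound. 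This yields
\[
\int_{(s-1,s+1)\times\mathbb S^2}\bigl(Z_{tt}^2+|\nabla_{\mathcal C}Z|^2+aZ^2\bigr)\le C\int_{(s-2,s+2)\times\mathbb S^2}Z^2,
\]
with $C$ independent of $s$ and of $a$. Cover $\{R\le|t|\le2R\}$ by $O(R)$ such cylinders and use $|\eta_R^{(k)}|\le CR^{-1}$. The whole remainder is then at most $C\,\omega(R-4)^2\to0$, where $\omega(\rho)=\sup_{|t|\ge\rho}|Z|$. With this estimate in place, the rest of your argument goes through.
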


\begin{proof}
	Set
	\[
	h(t)=\cosh(t/2),\qquad T(t)=\tanh(t/2),\qquad
	Z=h^{-1}(V_1-V_2).
	\]
	By \eqref{eq:apsc-two-solution-decay},
	\begin{equation}\label{eq:apsc-two-solution-Z-decay}
		\lim_{t\to\pm\infty}\sup_{\theta\in\mathbb S^2}|Z(t,\theta)|=0.
	\end{equation}
Subtracting \(PV_2=V_2^{-7}\) from \(PV_1=V_1^{-7}\) and applying the mean-value formula, we obtain $$P(V_1-V_2)+a(V_1-V_2)=0,$$
 where
$$ a(t,\theta) =7\int_0^1 \bigl((1-\sigma)V_2(t,\theta)+\sigma V_1(t,\theta)\bigr)^{-8} \,d\sigma>0. $$ Using the substitution $V_1 - V_2 = hZ$ with $h > 0$, we define $\mathcal{B}Z := h^{-1}P(hZ)$. Since \(h'/h=T/2\) and \(h''/h=1/4\), by the Leibniz rule, we have
$$ h^{-1}\partial_t^2(hZ) =Z_{tt}+TZ_t+\frac14Z $$
and
$$ h^{-1}\partial_t^4(hZ) =Z_{tttt}+2TZ_{ttt} +\frac32Z_{tt}+\frac T2Z_t+\frac1{16}Z. $$
Consequently,
\begin{equation*}\label{eq:apsc-two-solution-conjugation}
	\begin{aligned}
	\mathcal BZ&=h^{-1}P(hZ)=h^{-1}\left(\partial_t^4+
	\left(2\Delta_{\mathbb S^2}-\frac52\right)\partial_t^2+
	\left(\Delta_{\mathbb S^2}+\frac34\right)^2\right)(hZ)\\
	&=Z_{tttt}+2TZ_{ttt}
	+\frac32Z_{tt}+\frac T2Z_t+\frac1{16}Z+\left(2\Delta_{\mathbb S^2}-\frac52\right)
	\left(Z_{tt}+TZ_t+\frac14Z\right)
	+\left(\Delta_{\mathbb S^2}+\frac34\right)^2Z\\
&=Z_{tttt}+2TZ_{ttt}
		+(2\Delta_{\mathbb S^2}-1)Z_{tt}+2T(\Delta_{\mathbb S^2}-1)Z_t
		+\Delta_{\mathbb S^2}(\Delta_{\mathbb S^2}+2)Z.
	\end{aligned}
\end{equation*}
A direct calculation shows that $Z$ satisfies
\begin{equation}\label{eq:apsc-two-solution-equation}
	\mathcal{B}Z + aZ = 0.
\end{equation}

To prove \(Z\equiv0\), we combine a global coercive identity with an
exhaustion of \(\mathcal C\). Since \(a\) need not be globally bounded, we
first establish a Caccioppoli estimate, uniform in both the cylinder center
and \(a\), to control the cutoff errors. The endpoint decay
\eqref{eq:apsc-two-solution-Z-decay} will then allow us to remove the cutoff.

	\medskip
	\emph{A uniform Caccioppoli estimate.} For \(s\in\mathbb R\) and
	\(b>0\), let
	\(\mathcal C_s^b=(s-b,s+b)\times\mathbb S^2\). Choose
	\(\zeta=\zeta(t)\in C_c^\infty(\mathbb R)\) such that
	\(0\leq\zeta\leq1\), \(\zeta\equiv1\) on \((s-1,s+1)\),
	\(\operatorname{supp}\zeta\subset(s-2,s+2)\), and its derivatives are bounded independently of \(s\). Let
	\(\mathcal D=\partial_t^2+\Delta_{\mathbb S^2}\), \(q=\zeta^4\), and set
	\[
	A = \int_{\mathcal{C}} q|\mathcal{D}Z|^2\,dtd\theta, \qquad
	J = \int_{\mathcal{C}} \zeta^2|\nabla_{\mathcal{C}}Z|^2\,dtd\theta, \qquad
	M = \int_{\mathcal{C}_s^2} Z^2\,dtd\theta.
	\]
	Multiplying \eqref{eq:apsc-two-solution-equation} by \(qZ\) and integrating over the cylinder $\mathcal{C}$, we obtain 
	\begin{equation}\label{eq:core_identity}
		\int_{\mathcal{C}} qZ(\mathcal{B}Z)\,dtd\theta+ \int_{\mathcal{C}} aqZ^2\,dtd\theta = 0.
	\end{equation}
	Observe that \(\mathcal{B} = \mathcal{D}^2 + 2T\partial_t\mathcal{D} - \partial_t^2 + 2\Delta_{\mathbb{S}^2} - 2T\partial_t\), then
	\begin{equation}\label{eq:expansion}
		\int_{\mathcal{C}} qZ(\mathcal{B}Z)\,dtd\theta= \int_{\mathcal{C}} qZ(\mathcal{D}^2 Z + 2T\partial_t\mathcal{D}Z)\,dtd\theta + \int_{\mathcal{C}} qZ(-\partial_t^2 Z + 2\Delta_{\mathbb{S}^2} Z - 2T\partial_t Z)\,dtd\theta.
	\end{equation}
	Integrating by parts twice, we obtain 
	\begin{equation}
		\begin{aligned}
		\int_{\mathcal C} qZ(\mathcal D^2Z+2T\partial_t\mathcal DZ)\,dtd\theta
		&=\int_{\mathcal C}\mathcal D(qZ)\mathcal DZ\,dtd\theta-2\int_{\mathcal C}\partial_t(qTZ)\mathcal DZ\,dtd\theta\\
		&=A+\int_{\mathcal C}
		\bigl[2q'Z_t+q''Z-2qTZ_t-2(qT)'Z\bigr]\mathcal DZ\,dtd\theta.
		\label{eq:apsc-two-solution-local-high}
		\end{aligned}
	\end{equation}
	Similarly, 
	\begin{align*}
		\int_{\mathcal C} qZ(-Z_{tt}+2\Delta_{\mathbb S^2}Z-2TZ_t)\,dtd\theta
		=\int_{\mathcal C} q\bigl(Z_t^2-2|\nabla_{\mathbb S^2}Z|^2\bigr)\,dtd\theta+\int_{\mathcal C}\biggl[(qT)'-\frac12q''\biggr]Z^2\,dtd\theta.
	\end{align*}
	Since \(|q'|\leq C\zeta^3\) and
	\(|q''|+|(qT)'|\leq C\zeta^2\), Young's inequality applied to
	\eqref{eq:apsc-two-solution-local-high} gives, for every
	\(\varepsilon>0\),
	\begin{equation}\label{eq:apsc-two-solution-local-error}
		\left|\int_{\mathcal C}
		\bigl[2q'Z_t+q''Z-2qTZ_t-2(qT)'Z\bigr]\mathcal DZ\,dtd\theta\right|
		\le\varepsilon A+C_\varepsilon J+C_\varepsilon M.
	\end{equation}
   Likewise,
	\begin{equation}\label{eq:apsc-two-solution-local-low-bound}
		\left|\int_{\mathcal C}qZ
		(-Z_{tt}+2\Delta_{\mathbb S^2}Z-2TZ_t)\,dtd\theta\right|
		\leq CJ+CM.
	\end{equation}
	Combining \eqref{eq:expansion} with \eqref{eq:apsc-two-solution-local-error}--\eqref{eq:apsc-two-solution-local-low-bound}, and substituting the result into \eqref{eq:core_identity}, we choose $\varepsilon=1/2$ to obtain
	\begin{equation}\label{eq:apsc-two-solution-A-estimate}
		A+\int_{\mathcal C} aqZ^2\,dtd\theta\le CJ+CM.
	\end{equation}
	Since $\zeta$ depends only on $t$ and has compact support, integration by parts in \(t\) and on \(\mathbb S^2\) yields 
	\begin{align*}
		J&=\int_{\mathcal C}\zeta^2(Z_t^2+|\nabla_{\mathbb S^2}Z|^2)\,dtd\theta=-\int_{\mathcal C}Z\,\partial_t(\zeta^2Z_t)\,dtd\theta
		-\int_{\mathcal C}\zeta^2Z\Delta_{\mathbb S^2}Z\,dtd\theta\\
		&=-\int_{\mathcal C}\zeta^2Z\mathcal DZ\,dtd\theta
		-2\int_{\mathcal C}\zeta\zeta'ZZ_t\,dtd\theta\le \varepsilon A+\frac12 J+C_\varepsilon M.
	\end{align*}
	Thus \(J\leq2\varepsilon A+C_\varepsilon M\). Combining this with \eqref{eq:apsc-two-solution-A-estimate} and choosing \(\varepsilon>0\)
	so that \(2C\varepsilon\leq1/2\), we obtain
	\(A+\int_{\mathcal C}aqZ^2\,dtd\theta\leq CM\), and then
	\begin{equation}\label{eq:apsc-two-solution-AJ}
		A+J+\int_{\mathcal C}aqZ^2\,dtd\theta\le CM.
	\end{equation}
	Expanding \(A\) and using \(|q''|\leq C\zeta^2\), we have
	\begin{equation*}
		\int_{\mathcal C}q
		\left[
		Z_{tt}^2
		+2|\nabla_{\mathbb S^2}Z_t|^2
		+(\Delta_{\mathbb S^2}Z)^2
		\right]\,dtd\theta=A+\int_{\mathcal C}
		q''|\nabla_{\mathbb S^2}Z|^2\,dtd\theta\le A+CJ.
	\end{equation*}
	It follows that 
	\begin{align*}
		\int_{\mathcal C_s^1}
		\left(
		Z_{tt}^2+|\nabla_{\mathcal C}Z|^2+aZ^2
		\right)\,dtd\theta&\leq
		\int_{\mathcal C}q
		\left[
		Z_{tt}^2
		+2|\nabla_{\mathbb S^2}Z_t|^2
		+(\Delta_{\mathbb S^2}Z)^2
		\right]\,dtd\theta
		+J+\int_{\mathcal C}aqZ^2\,dtd\theta\\
		&\leq
		A+CJ+\int_{\mathcal C}aqZ^2\,dtd\theta.
	\end{align*}
	Together with
	\eqref{eq:apsc-two-solution-AJ}, we get
	\begin{equation}\label{eq:apsc-two-solution-caccioppoli}
		\int_{\mathcal C_s^1}
		\left(
		Z_{tt}^2+|\nabla_{\mathcal C}Z|^2+aZ^2
		\right)\,dtd\theta
		\leq C\int_{\mathcal C_s^2}Z^2\,dtd\theta,
	\end{equation}
	where \(C\) is independent of \(s\) and \(a\).
	
	\medskip
	\emph{A global coercive identity.} Choose a nonnegative
	\(\eta\in C_c^\infty(\mathbb R)\). Testing
	\eqref{eq:apsc-two-solution-equation} against \(\eta Z\) and integrating
	by parts gives
	\begin{equation}\label{eq:apsc-two-solution-global-identity}
		Q_\eta[Z]+\int_{\mathcal C}\eta aZ^2\,dtd\theta=-\mathcal R_\eta[Z],
	\end{equation}
	where
	\begin{align}
		Q_\eta[Z]=\int_{\mathcal C}\eta\bigl[&
		Z_{tt}^2+2|\nabla_{\mathbb S^2}Z_t|^2+(1+3T')Z_t^2\notag\\
		&+(\Delta_{\mathbb S^2}Z)^2-2|\nabla_{\mathbb S^2}Z|^2
		+T'|\nabla_{\mathbb S^2}Z|^2+(T'-T''')Z^2\bigr]\,dtd\theta,
		\label{eq:apsc-two-solution-Q}
	\end{align}
	and 
	\begin{align}
		\mathcal R_\eta[Z]={}&
		\int_{\mathcal C}\eta''
		\left(ZZ_{tt}-Z_t^2-\frac12Z^2-|\nabla_{\mathbb S^2}Z|^2\right)\,dtd\theta\notag\\
		&+\int_{\mathcal C}\eta'T
		\left(3Z_t^2+|\nabla_{\mathbb S^2}Z|^2\right)\,dtd\theta\notag\\
		&+\int_{\mathcal C}
		\left(\eta'T-\eta'''T-3\eta''T'-3\eta'T''\right)Z^2\,dtd\theta.
		\label{eq:apsc-two-solution-remainder}
	\end{align}
	For each fixed $t$, we can decompose $Z(t, \theta)$ into spherical harmonics as
	\[
	Z(t,\theta) = \sum_{\ell=0}^{\infty}\sum_k
	Z_{\ell,k}(t)Y_{\ell,k}(\theta),
	\]
	where $\{Y_{\ell,k}\}$ is an orthonormal basis of $L^2(\mathbb{S}^2)$ composed of eigenfunctions of $-\Delta_{\mathbb{S}^2}$ satisfying $-\Delta_{\mathbb S^2}Y_{\ell,k} = \lambda_\ell Y_{\ell,k}$ with $\lambda_\ell = \ell(\ell+1)$.
	By Parseval's identity, we get
	\[
	\int_{\mathbb S^2}
	(\Delta_{\mathbb S^2}Z)^2\,d\theta
	=\sum_{\ell,k}\lambda_\ell^2
	|Z_{\ell,k}(t)|^2.
	\]
	Moreover, integration by parts on \(\mathbb S^2\) yields
	\[
	\int_{\mathbb S^2}
	|\nabla_{\mathbb S^2}Z|^2\,d\theta
	=-\int_{\mathbb S^2}
	Z\Delta_{\mathbb S^2}Z\,d\theta
	=\sum_{\ell,k}\lambda_\ell
	|Z_{\ell,k}(t)|^2.
	\]
	Consequently,
	\[
	\int_{\mathbb S^2}
	\left[
	(\Delta_{\mathbb S^2}Z)^2
	-2|\nabla_{\mathbb S^2}Z|^2
	\right]\,d\theta
	=
	\sum_{\ell,k}
	\lambda_\ell(\lambda_\ell-2)
	|Z_{\ell,k}(t)|^2
	\geq0.
	\]
	Moreover,
	\[
	T'=\frac12\operatorname{sech}^2(t/2)>0,
	\qquad
	T'-T'''=\frac34\operatorname{sech}^4(t/2)>0.
	\]
	It follows from \eqref{eq:apsc-two-solution-Q} that
	\begin{equation}\label{eq:apsc-two-solution-coercivity}
		Q_\eta[Z]\ge
		\frac34\int_{\mathcal C}
		\eta\operatorname{sech}^4(t/2)Z^2\,dtd\theta\ge0.
	\end{equation}
	
	\medskip
	\emph{Removal of the cutoff.} Choose \(\eta_0\in C_c^\infty(\mathbb R)\) such that \(0\leq\eta_0\leq1\), \(\eta_0\equiv1\) on \([-1,1]\), and \(\operatorname{supp}\eta_0\subset[-2,2]\). For $R \ge 5$, set $\eta_R(t) = \eta_0(t/R)$ and $\mathcal{A}_R = \{(t, \theta) : R \le \vert{}t\vert{} \le 2R\}$. Then \(\operatorname{supp}\eta_R^{(k)}\times\mathbb S^2
	\subset\mathcal A_R\) and \( \|\eta_R^{(k)}\|_\infty\le C_kR^{-k}\), \(1\le k\le 3\). Since \(T,T'\), and \(T''\) are bounded, it follows from
	\eqref{eq:apsc-two-solution-remainder} that
	\begin{align}
		|\mathcal R_{\eta_R}[Z]|
		&\leq\frac{C}{R}\int_{\mathcal A_R}
		\bigl(Z^2+Z_t^2+|\nabla_{\mathbb S^2}Z|^2+|ZZ_{tt}|\bigr)\,dtd\theta\notag\\
		&\leq\frac{C}{R}\int_{\mathcal A_R}
		\bigl(Z^2+Z_{tt}^2+|\nabla_{\mathcal C}Z|^2\bigr)\,dtd\theta,
		\label{eq:apsc-two-solution-remainder-bound}
	\end{align}
	where the last inequality follows from Young's inequality.
	Define
	\[
	\omega(\rho)=
	\sup_{\{|t|\ge\rho\}\times\mathbb S^2}|Z(t,\theta)|.
	\]
	From \eqref{eq:apsc-two-solution-Z-decay}, \(\omega(\rho)\to0\) as
	\(\rho\to\infty\). Choose \(s_1,\ldots,s_{N_R}\), with \(N_R\leq CR\), such that \(\mathcal A_R\subset\bigcup_{k=1}^{N_R}\mathcal C_{s_k}^1\) and \(\mathcal C_{s_k}^2 \subset\{|t|\geq R-4\}\times\mathbb S^2\). By \eqref{eq:apsc-two-solution-caccioppoli} and \eqref{eq:apsc-two-solution-remainder-bound}, as \(R\to\infty\), we have 
	\begin{align*}
	|\mathcal R_{\eta_R}[Z]|\le\frac{C}{R}\int_{\mathcal A_R}
		\left(Z^2+Z_{tt}^2+|\nabla_{\mathcal C}Z|^2\right)\,dtd\theta\leq\frac{C}{R}\sum_{k=1}^{N_R}
		\int_{\mathcal C_{s_k}^2}Z^2\,dtd\theta\leq C\omega(R-4)^2\to 0.
	\end{align*}

	Fix \(L>0\). Then \(\eta_R\equiv1\) on \([-L,L]\) for every
	sufficiently large \(R\). Since \(a>0\), \eqref{eq:apsc-two-solution-global-identity} and \eqref{eq:apsc-two-solution-coercivity} yield
	\[
	0\le
	\frac34\int_{[-L,L]\times\mathbb S^2}
	\operatorname{sech}^4(t/2)Z^2
	\leq|\mathcal R_{\eta_R}[Z]|\longrightarrow0
	\quad\text{as }R\to\infty.
	\]
	Since \(Z\) is continuous, it follows that \(Z=0\) on
	\([-L,L]\times\mathbb S^2\). As \(L>0\) is arbitrary,
	\(Z\equiv0\) on \(\mathcal C\), and hence \(V_1\equiv V_2\).
\end{proof}

\begin{proof}[Proof of Theorem~\ref{thm2}]
	Let \(x=e^t\theta\) and set \( V(t,\theta)=e^{-t/2}u(e^t\theta)\).
	Since \((t,\theta)\mapsto e^t\theta\) is a smooth diffeomorphism from
	\(\mathcal C\) onto \(\mathbb R^3\setminus\{0\}\), we have
	\(V\in C^4(\mathcal C)\) and \(V>0\).
	The polar-coordinate identity
	\[
	\Delta\!\left(r^\alpha F(\log r,\theta)\right)
	=r^{\alpha-2}
	\left[\partial_t^2+(2\alpha+1)\partial_t+
	\Delta_{\mathbb S^2}+\alpha(\alpha+1)\right]F
	\]
	applied with \(\alpha=1/2\) and then \(\alpha=-3/2\) shows that
	\[
	PV=V^{-7}\qquad\text{on }\mathcal C,
	\]
	with \(P\) given by \eqref{eq:apsc-two-solution-operator}.
	
	For \(\tau\in\mathbb R\) and \(\mathcal O\in \mathrm{SO}(3)\), define
	\[
	V_{\tau,\mathcal O}(t,\theta)=V(t+\tau,\mathcal O\theta).
	\]
	Then \(V_{\tau,\mathcal O}\in C^4(\mathcal C)\) is positive. Since \(P\)
	commutes with translations in \(t\) and rotations on \(\mathbb S^2\),
	\[
	PV_{\tau,\mathcal O}(t,\theta)
	=(PV)(t+\tau,\mathcal O\theta)
	=V_{\tau,\mathcal O}(t,\theta)^{-7}.
	\]
	For each fixed \(\tau\in\mathbb R\) and
	\(\mathcal O\in\mathrm{SO}(3)\), 
	\begin{align*}
		\sup_{\theta\in\mathbb S^2}
		\frac{|V_{\tau,\mathcal O}(t,\theta)-V(t,\theta)|}
		{\cosh(t/2)}\leq\frac{2}{1+e^t}
		\left(
		e^{-\tau/2}\sup_{\theta\in\mathbb S^2}u(e^{t+\tau}\theta)
		+\sup_{\theta\in\mathbb S^2}u(e^t\theta)
		\right)\rightarrow0
		\quad\text{as }t\to-\infty,
	\end{align*}
	since \(u(x)\to0\) as \(x\to0\). By \eqref{eq:apsc-infinity-assumption}, we have
	\begin{align*}
		\sup_{\theta\in\mathbb S^2}
		\frac{|V_{\tau,\mathcal O}(t,\theta)-V(t,\theta)|}
		{\cosh(t/2)}\leq2\frac{e^t}{1+e^t}
		\left(
		e^{\tau/2}\sup_{\theta\in\mathbb S^2}
		\frac{u(e^{t+\tau}\theta)}{e^{t+\tau}}
		+\sup_{\theta\in\mathbb S^2}\frac{u(e^t\theta)}{e^t}
		\right)\rightarrow0
		\quad\text{as }t\to+\infty.
	\end{align*}
	It follows from Lemma~\ref{lem:apsc-two-solution-uniqueness} that
	\begin{equation}\label{eq:apsc-all-symmetries}
		V(t+\tau,\mathcal O\theta)=V(t,\theta),
		\qquad\tau\in\mathbb R,\ \mathcal O\in \mathrm{SO}(3).
	\end{equation}
	Taking \(\mathcal O=I\) in \eqref{eq:apsc-all-symmetries} shows that \(V\) is
	independent of \(t\); taking \(\tau=0\) and using the transitivity of
	\(\mathrm{SO}(3)\) on \(\mathbb S^2\) shows that it is independent of \(\theta\).
	Thus \(V\equiv c>0\). Substitution into \(PV=V^{-7}\) yields \(c=c_*\). It follows that
	\(u(x)=c_*|x|^{1/2}\).
	
	Finally,
	\[
	\Delta(c_*r^{1/2})=\frac34c_*r^{-3/2},
	\qquad
	|\nabla(c_*r^{1/2})|^2=\frac14c_*^2r^{-1}.
	\]
	The conformal scalar-curvature identity
	\eqref{eq:scalar-curvature-identity} therefore yields
	\(R_g=2c_*^4=8/3\).
\end{proof}

\section{Variational constructions and sharpness examples}

In this section, we use a variational method to construct two examples showing that the assumptions in Theorems~\ref{thm1} and~\ref{thm2} are essential. The first is a nonradial positive solution with sign-changing scalar curvature. The second is a family of radial positive solutions satisfying \(u_\ell(x)/|x|\to\ell>0\) as \(|x|\to\infty\). These examples show, respectively, that the curvature assumption in Theorem~\ref{thm1} cannot be omitted and that the condition \(u(x)=o(|x|)\) in Theorem~\ref{thm2} cannot be replaced by \(u(x)=O(|x|)\).

\begin{lem}\label{prop:variational-nonradial-counterexample}
	Set $c_*=(4/3)^{1/4}$ and $Q(x)=x_1^2+x_2^2+\frac32x_3^2$.
	There exists a positive solution
	$u\in C^\infty(\mathbb R^3\setminus\{0\})$, not radially symmetric about
	any point, of
	\begin{equation}\label{eq:vnc-equation}
		\Delta^2u=u^{-7}\qquad\text{in }\mathbb R^3\setminus\{0\}
	\end{equation}
	such that
	\begin{equation}\label{eq:vnc-asymptotics}
		\begin{aligned}
			u(x)&=c_*|x|^{1/2}+o\bigl(|x|^{1/2}\bigr)&&\text{as }x\to0,\\
			u(x)&=Q(x)+o\bigl(|x|^{1/2}\bigr)&&\text{as }|x|\to\infty,
		\end{aligned}
	\end{equation}
	with both remainders uniform in direction.
	The scalar curvature of $g=u^{-4}|dx|^2$ is positive in a punctured
	neighborhood of the origin and negative outside a sufficiently large ball.
\end{lem}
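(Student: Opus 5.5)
The plan is to build the solution as a perturbation $u=u_0+w$ of an explicit approximate solution that already has the two prescribed ends, and to find $w$ by minimizing a convex functional. Fix a cutoff $\chi\in C_c^\infty(\mathbb R^3)$ with $\chi=1$ on $B_1$ and $\operatorname{supp}\chi\subset B_2$, and set
\[
u_0(x)=\chi(x)\,c_*|x|^{1/2}+Q(x).
\]
Since $\Delta Q=7$, we have $\Delta^2Q=0$, and $u_0>0$ away from the origin. Define the error
\[
f:=\Delta^2u_0-u_0^{-7}.
\]
Near the origin, $\Delta^2u_0=c_*^{-7}|x|^{-7/2}$, while $u_0^{-7}=c_*^{-7}|x|^{-7/2}\bigl(1+O(|x|^{3/2})\bigr)$, so $|f|\lesssim|x|^{-2}$. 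Outside $B_2$ we have $f=-Q^{-7}=O(|x|^{-14})$. The cutoff is essential here. Without it, the tail $\Delta^2(c_*|x|^{1/2})\sim|x|^{-7/2}$ would pair with a $D^{2,2}$ function growing like $|x|^{1/2}$ and produce a logarithmically divergent term.

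\textbf{Variational step.} We seek $w$ in $D^{2,2}(\mathbb R^3)$, the completion of $C_c^\infty$ under $\|\Delta w\|_{L^2}$. This space carries the Rellich--Hardy inequality $\int w^2|x|^{-4}\le C\int(\Delta w)^2$, and its elements are $C^{1/2}$ with $|w(x)-w(0)|\lesssim|x|^{1/2}$. Set
\[
E(w)=\frac12\int(\Delta w)^2+\int\Bigl[\Delta^2u_0\,w+\tfrac16\bigl((u_0+w)^{-6}-u_0^{-6}\bigr)\Bigr],
\]
with the bracket taken to be $+\infty$ wherever $u_0+w\le0$. The integrand is convex in $w$, its $w$-derivative is $\Delta^2u_0-(u_0+w)^{-7}$, and it is bounded below by $f\,w$. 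Using $|f||x|^2\in L^2$ near $0$ together with the Hardy inequality, and the rapid decay of $f$ at infinity, we get $E(w)\ge c\|\Delta w\|^2-C\|\Delta w\|$. The functional $E$ is coercive, convex and weakly lower semicontinuous, and $E(0)<\infty$. Hence a minimizer $w$ exists.

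\textbf{Main obstacle: from minimizer to solution.} One must show that $u_0+w$ is strictly positive off the origin, so that the minimizer satisfies the Euler--Lagrange equation $\Delta^2(u_0+w)=(u_0+w)^{-7}$ rather than only a variational inequality. I would first test against perturbations $w+s\varphi$ with $\varphi\ge0$ and $s>0$. This yields $\Delta^2u\ge u^{-7}\ge0$ in the distributional sense, with $u^{-7}\in L^1_{\mathrm{loc}}$. Next I would use the $(u_0+w)^{-6}$ barrier, which makes the set $\{u=0\}$ cost infinite energy unless it has measure zero, combined with a local argument. Concretely: take the Green representation on small balls away from $0$, and use the fact that $\Delta^2u\ge c\,u^{-7}$ is incompatible with $u$ vanishing at an interior point where $u\in C^{1/2}$, since $\int u^{-7}$ would then diverge. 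This gives $u>0$ on $\mathbb R^3\setminus\{0\}$. Two-sided variations then give the equation, and bootstrapping gives $u\in C^\infty(\mathbb R^3\setminus\{0\})$.

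\textbf{Asymptotics and the curvature sign.} Pass to the cylinder with $W(t,\theta)=e^{-t/2}w(e^t\theta)$. The equation for $W$ is a perturbation of the linearization $P+7c_*^{-8}$ at $t\to-\infty$, and of $P$ plus a decaying potential at $t\to+\infty$. The finite-energy condition rules out the growing indicial modes. For the $\ell=0$ part at $-\infty$, the roots of $\lambda^4-\frac52\lambda^2+\frac92$ are nonreal with nonzero real part, so $W\to0$ there. At $+\infty$ the admissible modes are $O(e^{-t/2})$ up to the $\chi$-cutoff tail. Both remainders in \eqref{eq:vnc-asymptotics} are therefore $o(|x|^{1/2})$, uniformly in direction, and the same holds with derivatives by the uniform interior estimates used in Section~\ref{sec:symmetry}.

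These asymptotics give the remaining claims. Near $0$, \eqref{eq:scalar-curvature-identity} yields $R_g\to 8/3>0$, because $u_*$ has $R_g\equiv8/3$. At infinity,
\[
Q\Delta Q-2|\nabla Q|^2=-x_1^2-x_2^2-\tfrac{15}{2}x_3^2<0,
\]
which is of order $|x|^2$, while the remainder contributes only $o(|x|^{2})$. Hence $R_g<0$ outside a large ball. Finally, $u$ is not radial about any point $p$: radiality would force $Q(x)+o(|x|^{1/2})$ to be a function of $|x-p|$, which is impossible because $Q$ is anisotropic at quadratic order.
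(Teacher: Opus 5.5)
Your overall plan is the paper's: perturb an explicit profile that has the two prescribed ends, then minimize a convex functional whose integrand $\tfrac16\bigl((u_0+w)^{-6}-u_0^{-6}\bigr)+\Delta^2u_0\,w$ is bounded below by $fw$. Positivity comes from the nonintegrability of $|x-x_0|^{-3}$ (or $|x-x_0|^{-7/2}$) at a zero, followed by bootstrapping, asymptotics, curvature signs and nonradiality. Using $u_0=\chi s+Q$ instead of $\chi s+(1-\chi)Q$ is harmless, since $f=O(|x|^{-2})$ near $0$ is still integrable against $|x|^{1/2}$.

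\textbf{The function space is wrong, and this is not cosmetic.} In $\mathbb R^3$ the completion of $C_c^\infty(\mathbb R^3)$ under $\|\Delta w\|_{L^2}$ does not control values of $w$. Take $\eta\in C_c^\infty$ with $\eta(0)=1$ and set $\eta_R=\eta(\cdot/R)$. Then $\|\Delta\eta_R\|_2^2=R^{-1}\|\Delta\eta\|_2^2\to0$, while $\eta_R(0)=1$ for every $R$.
\begin{itemize}
\item Its elements are therefore defined only modulo constants, and point evaluation is meaningless.
\item Your functional $E$, which contains $(u_0+w)^{-6}$, is not defined on it.
\item The Rellich inequality you invoke is false there: any $\psi\in C_c^\infty(\mathbb R^3)$ with $\psi(0)\neq0$ has $\int\psi^2|x|^{-4}\,dx=\infty$.
\end{itemize}
Both your coercivity bound $\int_{B_1}|f||w|\le\||x|^2f\|_{L^2}\|w|x|^{-2}\|_{L^2}$ and the expansion at the origin need $w(0)=0$. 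If $w(0)$ were free, then $u(0)=w(0)$ and the rupture profile would be lost. The repair is to work in the completion $\mathcal H$ of $C_c^\infty(\mathbb R^3\setminus\{0\})$, as the paper does. There the Fourier bound $|\psi(x)-\psi(y)|\le C\|\Delta\psi\|_2|x-y|^{1/2}$, together with $\psi(0)=0$, gives continuous representatives with $|w(x)|\le C\|w\|_{\mathcal H}|x|^{1/2}$. Rellich's inequality then holds, with constant $9/16$.

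\textbf{The asymptotics step is not yet an argument.} Treating the cylindrical equation as a perturbation of $P+7c_*^{-8}$ as $t\to-\infty$ presupposes $W\to0$ there, which is exactly what must be shown. The phrase ``finite energy rules out the growing indicial modes'' describes the linear problem, not the nonlinear one. In $\mathcal H$ no equation is needed. Take $\psi_j\in C_c^\infty(\mathbb R^3\setminus\{0\})$ with $\psi_j\to w$. Then $w-\psi_j$ coincides with $w$ near $0$ and near $\infty$, so the pointwise bound gives
\[
\limsup_{r\downarrow0}\sup_{|x|=r}\frac{|w(x)|}{r^{1/2}}\le C\|w-\psi_j\|_{\mathcal H},
\qquad
\limsup_{r\to\infty}\sup_{|x|=r}\frac{|w(x)|}{r^{1/2}}\le C\|w-\psi_j\|_{\mathcal H}.
\]
Letting $j\to\infty$ yields both expansions, uniformly in direction. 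Next apply interior $W^{4,p}$ estimates on a fixed annulus to $r^{-1/2}u(r\,\cdot)-s$ and to $R^{-2}u(R\,\cdot)-Q$. This upgrades the convergence to $C^2$, which is what your curvature-sign step actually uses, including the $o(|x|^2)$ remainder at infinity.

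With these two changes, the rest of your proposal goes through. That includes the one-sided variation giving $u^{-7}\in L^1_{\mathrm{loc}}$ and the anisotropy argument for nonradiality.
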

\begin{proof} We divide the proof into four steps.
	
	\emph{Step 1: The energy space.}
	Let \(\Omega=\mathbb R^3\setminus\{0\}\) and
	\(E=C_c^\infty(\Omega)\). On \(E\), set
	\[
	(\psi,\eta)_E
	=\int_{\mathbb R^3}\Delta\psi\,\Delta\eta\,dx,
	\qquad
	\|\psi\|_E=\|\Delta\psi\|_{L^2(\mathbb R^3)}.
	\]
	If \(\|\psi\|_E=0\), integration by parts gives
	\[
	0=-\int_{\mathbb R^3}\psi\Delta\psi\,dx
	=\int_{\mathbb R^3}|\nabla\psi|^2\,dx,
	\]
	so \(\psi\equiv0\). Thus \((\cdot,\cdot)_E\) is an inner product.
	Let \(\mathcal H\) be the Hilbert completion of \(E\). For any $w \in \mathcal{H}$, represented by an approximating Cauchy sequence $\{\psi_j\} \subset E$, its norm is
	\[
	\|w\|_{\mathcal H}
	=\lim_{j\to\infty}\|\Delta\psi_j\|_2,
	\]
	which vanishes only for the zero element of the completion.
	
	For \(\psi\in C_c^\infty(\Omega)\), regarded as its smooth zero
	extension to \(\mathbb R^3\), and for \(x\ne y\), Fourier inversion gives
	\begin{equation*}
		\begin{aligned}
			|\psi(x)-\psi(y)|&=\frac{1}{(2\pi)^{3/2}}\left| \int_{\mathbb{R}^3} e^{iy \cdot \xi} \left( e^{i(x-y) \cdot \xi} - 1 \right) \hat{\psi}(\xi) \, d\xi\right|\\
			&\le \frac{1}{(2\pi)^{3/2}}\int_{\mathbb{R}^3} \left| e^{i(x-y) \cdot \xi} - 1 \right| |\hat{\psi}(\xi)| \, d\xi\\
			&=\frac{1}{(2\pi)^{3/2}} \int_{\mathbb{R}^3} \frac{\left| e^{i(x-y) \cdot \xi} - 1 \right|}{|\xi|^2} |\widehat{\Delta \psi}(\xi)| \, d\xi,
		\end{aligned}
	\end{equation*}
	where the last equality follows from
	\(\widehat{\Delta\psi}(\xi)=-|\xi|^2\widehat\psi(\xi)\). The
	Cauchy--Schwarz inequality and the change of variables
	\(\eta=|x-y|\xi\) yield
	\begin{equation}\label{eq:vnc-holder}
		\begin{aligned}
			|\psi(x) - \psi(y)| &\le C \|\Delta \psi\|_2
			\left( |x-y| \int_{\mathbb{R}^3} \frac{\left| e^{i \frac{x-y}{|x-y|} \cdot \eta} - 1 \right|^2}{|\eta|^4} \, d\eta \right)^{1/2}\\
			&=C \|\Delta \psi\|_2 |x - y|^{1/2}.
		\end{aligned}
	\end{equation}
	The integral above is finite, since its integrand is
	\(O(|\eta|^{-2})\) near the origin and \(O(|\eta|^{-4})\) at infinity.
	
	Let \(w\in\mathcal H\), and let \(\{\psi_j\}\subset E\) be a Cauchy sequence representing \(w\). For every compact set \(K\subset\mathbb R^3\),  taking $y=0$ in \eqref{eq:vnc-holder}  and using $\psi_j(0)=\psi_k(0)=0$, we obtain
	\begin{equation*}
		\sup_K|\psi_j-\psi_k|
		\leq C_K\|\Delta(\psi_j-\psi_k)\|_2.
	\end{equation*}
	Hence there exists \(\widetilde w\in C(\mathbb R^3)\) such that \(\psi_j\rightarrow\widetilde w\) in \(C_{\mathrm{loc}}(\mathbb R^3)\) as \(j\to\infty\). Moreover, since \(\psi_j(0)=0\) for every \(j\), \(\widetilde w(0)=0\). If \(\{\varphi_j\}\subset E\) is another Cauchy sequence representing \(w\), then
	\begin{equation*}
		\sup_K|\psi_j-\varphi_j|
		\leq C_K\|\Delta(\psi_j-\varphi_j)\|_2
		\rightarrow0\quad\text{as }j\to\infty.
	\end{equation*}
	Thus \(\varphi_j\to\widetilde w\) in \(C_{\mathrm{loc}}(\mathbb R^3)\) as \(j\to\infty\), so \(\widetilde w\) is independent of the representing sequence. Consequently, every \(w\in\mathcal H\) admits a unique continuous representative, still denoted by \(w\), satisfying \(w(0)=0\). Moreover, \(\{\Delta\psi_j\}\) converges to some
	\(q\in L^2(\mathbb R^3)\). For every
	\(\eta\in C_c^\infty(\mathbb R^3)\),
	\[
	\int_{\mathbb R^3}q\eta\,dx
	=\lim_{j\to\infty}\int_{\mathbb R^3}\Delta\psi_j\,\eta\,dx
	=\lim_{j\to\infty}\int_{\mathbb R^3}\psi_j\Delta\eta\,dx
	=\int_{\mathbb R^3}w\Delta\eta\,dx.
	\]
	Thus \(q=\Delta w\) in the sense of distributions. Consequently,
	\[
	\|w\|_{\mathcal H}=\|\Delta w\|_2,\qquad
	\langle w_1,w_2\rangle_{\mathcal H}
	=\int_{\mathbb R^3}\Delta w_1\Delta w_2\,dx.
	\]
	Passing to the limit in \eqref{eq:vnc-holder}, we obtain
	\begin{equation}\label{eq:vnc-point-evaluation}
		|w(x)-w(y)|
		\leq C\|w\|_{\mathcal H}|x-y|^{1/2},
		\qquad
		|w(x)|
		\leq C\|w\|_{\mathcal H}|x|^{1/2}.
	\end{equation}
	
	Finally, for each fixed \(j\), the function \(\psi_j\) vanishes near
	both zero and infinity. Applying \eqref{eq:vnc-point-evaluation} to
	\(w-\psi_j\) gives
	\[
	\max\left\{
	\limsup_{r\downarrow0}\sup_{|x|=r}
	\frac{|w(x)|}{r^{1/2}},
	\limsup_{r\to\infty}\sup_{|x|=r}
	\frac{|w(x)|}{r^{1/2}}
	\right\}
	\leq C\|w-\psi_j\|_{\mathcal H}.
	\]
	Letting \(j\to\infty\), we conclude that
	\begin{equation}\label{eq:vnc-endpoint-smallness}
		\lim_{r\downarrow0}\sup_{|x|=r}
		\frac{|w(x)|}{r^{1/2}}
		=
		\lim_{r\to\infty}\sup_{|x|=r}
		\frac{|w(x)|}{r^{1/2}}
		=0.
	\end{equation}
	
	\medskip
	\emph{Step 2: Variational framework.}
	Set \(s(x)=c_*|x|^{1/2}\), so that \(\Delta^2s=s^{-7}\) in
	\(\Omega\). Choose \(\chi\in C^\infty([0,\infty))\) such that
	\(0\leq\chi\leq1\), \(\chi=1\) on \([0,1]\), and \(\chi=0\) on
	\([2,\infty)\). Define
	\[
	\phi=\chi(|x|)s+(1-\chi(|x|))Q,
	\qquad
	f=\Delta^2\phi-\phi^{-7}.
	\]
	Then \(\phi>0\), \(f=0\) for \(0<|x|<1\), and \(f=-Q^{-7}\) for
	\(|x|>2\). Since \(Q(x)\geq|x|^2\),
	\begin{equation}\label{eq:vnc-residual}
		\int_\Omega |f(x)|\,|x|^{1/2}\,dx<\infty,
		\qquad
		\left|\int_\Omega fw\,dx\right|
		\leq C_f\|w\|_{\mathcal H}.
	\end{equation}
	Set
	\[
	\Phi(t)=
	\begin{cases}
		\frac16t^{-6},&t>0,\\
		+\infty,&t\leq0,
	\end{cases}
	\]
	and define
	\begin{equation}\label{G}
	G(x,a)=\Phi(\phi(x)+a)-\Phi(\phi(x))+\phi(x)^{-7}a.
	\end{equation}
	Convexity of \(\Phi\) gives \(G(x,a)\geq0\). Consider the energy functional
	\(\mathcal J:\mathcal H\to\mathbb R\cup\{+\infty\}\) defined by
	\begin{equation}\label{eq:vnc-functional}
		\mathcal J(w)
		=\frac12\|w\|_{\mathcal H}^2
		+\int_\Omega G(x,w(x))\,dx
		+\int_\Omega f(x)w(x)\,dx.
	\end{equation}
	By \eqref{eq:vnc-residual},
	\[
	\mathcal J(w)\geq
	\frac12\|w\|_{\mathcal H}^2-C_f\|w\|_{\mathcal H}.
	\]
	Thus \(\mathcal J\) is coercive and bounded from below. Let $\{w_j\} \subset \mathcal{H}$ be a minimizing sequence such that $\mathcal{J}(w_j) \to \inf_{\mathcal{H}} \mathcal{J}$ as \(j\to\infty\). Since $\mathcal J(0)=0$, \(\{w_j\}\) is bounded in \(\mathcal H\). Up to a subsequence, there exists \(w\in\mathcal H\) such that \(w_j\rightharpoonup w\) weakly in \(\mathcal H\) as \(j\to\infty\). For each fixed \(x\), \eqref{eq:vnc-point-evaluation} shows that
	\(v\mapsto v(x)\) is a bounded linear functional on \(\mathcal H\). Thus for any \(x\in\Omega\),
	\(w_j(x)\to w(x)\) as \(j\to\infty\). Since \(G(x,\cdot)\) is nonnegative and lower
	semicontinuous, Fatou's lemma gives
	\[
	\int_\Omega G(x,w(x))\,dx
	\leq\liminf_{j\to\infty}
	\int_\Omega G(x,w_j(x))\,dx.
	\]
	By \eqref{eq:vnc-residual}, the map
	\(v\mapsto\int_\Omega fv\,dx\) is a bounded linear functional on
	\(\mathcal H\), and is therefore weakly continuous. Together with the
	weak lower semicontinuity of the \(\mathcal H\)-norm, the preceding
	inequality gives
	\[
	\mathcal J(w)\leq\liminf_{j\to\infty}\mathcal J(w_j).
	\]
	Thus \(w\) is a minimizer of \(\mathcal J\). Since
	\(\mathcal J(w)\leq\mathcal J(0)=0\),
	\begin{equation}\label{eq:vnc-finite-potential}
		\int_\Omega G(x,w(x))\,dx<\infty.
	\end{equation}
	
	Set $u=\phi+w$. We claim that \(u>0\) in \(\Omega\). If
	\(u(x_0)<0\) at some point, then continuity gives an open set on which
	\(G(x,w)=+\infty\), contradicting \eqref{eq:vnc-finite-potential}. Hence
	\(u\geq0\). If \(u(x_0)=0\), choose \(B_\rho(x_0)\Subset\Omega\).
	Then \eqref{eq:vnc-point-evaluation} and the smoothness of \(\phi\) give
	\[
	0\leq u(x)=|u(x)-u(x_0)|\leq C|x-x_0|^{1/2}
	\qquad\text{near }x_0.
	\]
	Since \(\phi\) is bounded away from zero and \(w\) is bounded near \(x_0\),
	\[
	G(x,w)=\Phi(u)+O(1)\qquad\text{near }x_0.
	\]
	Consequently, for sufficiently small \(\rho>0\),
	\[
	\int_{B_\rho(x_0)}G(x,w)\,dx
	\geq c\int_{B_\rho(x_0)}|x-x_0|^{-3}\,dx-C=\infty,
	\]
	again contradicting \eqref{eq:vnc-finite-potential}. Thus \(u>0\) in
	\(\Omega\).
	
	Let \(\eta\in C_c^\infty(\Omega)\), set
	\(K=\operatorname{supp}\eta\), and let \(m=\min_Ku>0\). If
	\(0<|t|\|\eta\|_\infty\leq m/2\), then \(u+t\eta\geq m/2\) on \(K\).
	The mean-value theorem shows
	\[
	\left|\frac{\Phi(u+t\eta)-\Phi(u)}{t}\right|
	\leq C_K|\eta|.
	\]
	Since \(C_K|\eta|\in L^1(\Omega)\), we have 
	$$ 
	\begin{aligned}
		0=\left. \frac{d}{dt}\mathcal J(w+t\eta) \right|_{t=0}&= \int_\Omega \Delta w \Delta \eta \,dx + \int_\Omega f\eta\,dx+ \lim_{t \to 0}\int_\Omega \frac{G(x, w+t\eta) - G(x, w)}{t} \,dx\\
		&= \int_\Omega \Delta w \Delta \eta \,dx + \int_\Omega (f+\phi^{-7})\eta\,dx+ \lim_{t \to 0}\int_\Omega \frac{\Phi(u+t\eta) - \Phi(u)}{t} \,dx\\
		&= \int_\Omega \Delta w \Delta \eta \,dx + \int_\Omega (\phi^{-7} - u^{-7} + f)\eta \,dx\\
		&=\int_\Omega \Delta u \Delta \eta \,dx - \int_\Omega u^{-7}\eta \,dx.
	\end{aligned} 
	$$
	Here the third equality follows from the Dominated Convergence Theorem, and the last equality follows from the definition of $f$ and $u = \phi + w$. Hence \(u\) is a weak solution of
	\eqref{eq:vnc-equation}. Since \(u\) is continuous and strictly positive, \(u^{-7}\in L^p_{\mathrm{loc}}(\Omega)\) for every \(1<p<\infty\). Interior \(W^{4,p}\)-estimates yield
	\(u\in W_{\mathrm{loc}}^{4,p}(\Omega)\). Taking \(p>3\) and applying
	the Sobolev--Morrey embedding, we obtain
	\(u\in C_{\mathrm{loc}}^{3,\alpha}(\Omega)\) for some
	\(\alpha\in(0,1)\). Since the nonlinearity \(u^{-7}\) is smooth for \(u>0\), standard interior
	elliptic bootstrapping yields \(u\in C^\infty(\Omega)\).
	
	\medskip
	\emph{Step 3: Asymptotics and nonradiality.}
	By the definition of \(\phi\), we have \(\phi=s\) near zero, and \(\phi=Q\) near infinity. Hence
	\eqref{eq:vnc-endpoint-smallness} gives \eqref{eq:vnc-asymptotics}.
	If \(u\) were radial about some \(a\in\mathbb R^3\), then
	\(u(a+Re_1)=u(a+Re_3)\) for every sufficiently large \(R\). On the other hand, the uniform asymptotics at infinity yield
	\begin{align*}
		u(a+Re_3)-u(a+Re_1)
		&=Q(a+Re_3)-Q(a+Re_1)+o(R^{1/2})\\
		&=\frac12R^2+(3a_3-2a_1)R+o(R^{1/2}),
	\end{align*}
	which is positive for all sufficiently large \(R\). This is a contradiction, and so \(u\) is not radially symmetric about any point.
	
	\medskip
	\emph{Step 4: The curvature signs.}
	Let
	\[
	A=\bigl\{y:\tfrac12<|y|<2\bigr\},
	\qquad
	A'=\bigl\{y:\tfrac34<|y|<\tfrac32\bigr\},
	\]
	and define
	\[
	v_r(y)=r^{-1/2}u(ry),
	\qquad
	z_R(y)=R^{-2}u(Ry).
	\]
	Then
	\[
	\Delta^2v_r=v_r^{-7},
	\qquad
	\Delta^2z_R=R^{-12}z_R^{-7}.
	\]
	Since \(s\) and \(Q\) are homogeneous of degrees \(1/2\) and \(2\), respectively, for all sufficiently small \(r\) and sufficiently large \(R\),	
	$$
	v_r(y)-s(y)=r^{-1/2}w(ry),
	\qquad
	z_R(y)-Q(y)=R^{-2}w(Ry),
	\qquad y\in A.
	$$
It follows from	\eqref{eq:vnc-endpoint-smallness} that
	$$
	\lVert v_r-s\rVert_{L^\infty(A)}\rightarrow0
	\quad\text{as }r\to0^+,
	\qquad
	\lVert z_R-Q\rVert_{L^\infty(A)}\rightarrow0
	\quad\text{as }R\to\infty.
	$$
	Since \(s,Q>0\) on
	\(\overline A\), the functions \(v_r\) and \(z_R\) are uniformly
	bounded away from zero. Hence
	\[
	\begin{aligned}
		\|\Delta^2(v_r-s)\|_{L^\infty(A)}
		&=\|v_r^{-7}-s^{-7}\|_{L^\infty(A)}\rightarrow0\quad\text{as }r\to0^+,\\
		\|\Delta^2(z_R-Q)\|_{L^\infty(A)}
		&=R^{-12}\|z_R^{-7}\|_{L^\infty(A)}\rightarrow0\quad\text{as }R\to\infty.
	\end{aligned}
	\]
	For any fixed \(p>3\), the standard interior estimate
	\[
	\|h\|_{W^{4,p}(A')}
	\leq C\bigl(\|h\|_{L^p(A)}
	+\|\Delta^2h\|_{L^p(A)}\bigr)
	\]
	applied to \(h=v_r-s\) and \(h=z_R-Q\), followed by the
	Sobolev--Morrey embedding, yields
\begin{equation}\label{eq:vnc-rescaled-C2}
	v_r\rightarrow s\quad\text{in }C^2(\overline{A'})
	\quad(r\to0^+),
	\qquad
	z_R\rightarrow Q\quad\text{in }C^2(\overline{A'})
	\quad(R\to\infty).
\end{equation}
	
	For a positive function \(v\), set
	\[
	\mathcal R[v]
	=8v^2\bigl(v\Delta v-2|\nabla v|^2\bigr).
	\]
	Scaling gives
	\[
	\mathcal R[v_r](y)=R_g(ry),
	\qquad
	\mathcal R[z_R](y)=R^{-6}R_g(Ry).
	\]
	Since \(\mathcal R[s]=8/3\), the first convergence in
	\eqref{eq:vnc-rescaled-C2} implies
	\begin{equation*}
		\lim_{r\to0^+}\sup_{\theta\in\mathbb S^2}
		\left|R_g(r\theta)-\frac83\right|=0.
	\end{equation*}
	At infinity,
	\[
	\Delta Q=7,
	\qquad
	Q\Delta Q-2|\nabla Q|^2
	=-x_1^2-x_2^2-\frac{15}{2}x_3^2.
	\]
	Writing \(\theta=x/|x|\), a direct computation gives
	\[
	\mathcal R[Q](x)
	=-8|x|^6\left(1+\frac{\theta_3^2}{2}\right)^2
	\left(1+\frac{13\theta_3^2}{2}\right).
	\]
	The second convergence in \eqref{eq:vnc-rescaled-C2} therefore gives,
	uniformly on \(\mathbb S^2\),
	\begin{equation*}
		R^{-6}R_g(R\theta)
		\rightarrow
		\mathcal R[Q](\theta)
		=-8\left(1+\frac{\theta_3^2}{2}\right)^2
		\left(1+\frac{13\theta_3^2}{2}\right)
		\leq-8<0.
	\end{equation*}
	Thus \(R_g>0\) in a punctured neighborhood of \(0\) and \(R_g<0\)
	outside a sufficiently large ball. 
\end{proof}

\begin{cor}\label{cor:radial-linear-growth}
	For every \(\ell>0\), there exists a positive radial solution
	\(u_\ell\in C^\infty(\mathbb R^3\setminus\{0\})\) of \eqref{eq1} such that
	\begin{align*}\begin{aligned} u_\ell(x) &=c_*|x|^{1/2}+o\bigl(|x|^{1/2}\bigr) &&\text{as }x\to0,\\ u_\ell(x) &=\ell|x|+o\bigl(|x|^{1/2}\bigr) &&\text{as }|x|\to\infty. \end{aligned} \end{align*}
	In particular,	\(u_\ell\not\equiv u_*\) and \(u_\ell(x)/|x|\to\ell\) as \(|x|\to\infty\).
\end{cor}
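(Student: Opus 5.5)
Rerun the variational construction of Lemma~\ref{prop:variational-nonradial-counterexample} with the quadratic $Q$ replaced by the radial linear profile $\ell|x|$, and restrict to radial competitors so that the minimizer is radial. Concretely, fix $\ell>0$ and set
\[
\phi_\ell=\chi(|x|)s+(1-\chi(|x|))\,\ell|x|,
\qquad
f_\ell=\Delta^2\phi_\ell-\phi_\ell^{-7}.
\]
Here $s=c_*|x|^{1/2}$ and $\chi$ is the same cutoff. Then $\phi_\ell>0$ and $\phi_\ell$ is radial. Moreover $f_\ell=0$ for $|x|<1$. For $|x|>2$ we have $\Delta^2|x|=\Delta(2/|x|)=0$, so $f_\ell=-\ell^{-7}|x|^{-7}$ there. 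Hence $\int_\Omega|f_\ell||x|^{1/2}\,dx<\infty$, and, by \eqref{eq:vnc-point-evaluation}, the functional $w\mapsto\int f_\ell w$ is bounded on $\mathcal H$. This gives the analogue of \eqref{eq:vnc-residual}.

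\textbf{Minimization in the radial space.} Let $\mathcal H_{\mathrm{rad}}\subset\mathcal H$ be the closed subspace of radial elements; it is the closure of the radial functions in $C_c^\infty(\Omega)$, and it equals the fixed set of the $\mathrm{SO}(3)$-action. Define $G$ and $\mathcal J$ exactly as in \eqref{G} and \eqref{eq:vnc-functional}, with $\phi_\ell,f_\ell$ in place of $\phi,f$. The coercivity, weak lower semicontinuity (via pointwise convergence and Fatou) and existence of a minimizer carry over verbatim on $\mathcal H_{\mathrm{rad}}$. So does the positivity argument for $u_\ell=\phi_\ell+w$.

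\textbf{From radial critical point to solution.} The only new point is that the Euler--Lagrange equation is initially tested only against radial $\eta$. I would remove this restriction by the principle of symmetric criticality. Since $\mathcal J$ is $\mathrm{SO}(3)$-invariant ($\phi_\ell$ and $f_\ell$ are radial), take any $\eta\in C_c^\infty(\Omega)$ and average it to $\bar\eta=\int_{\mathrm{SO}(3)}\eta\circ\mathcal O\,d\mathcal O$. Because $u_\ell$ is radial, the linear functional
\[
\eta\mapsto\int\Delta u_\ell\Delta\eta-\int u_\ell^{-7}\eta
\]
is invariant under rotations of $\eta$. Its value at $\eta$ therefore equals its value at $\bar\eta$, which is zero. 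Thus $u_\ell$ is a weak solution on $\Omega$. Smoothness then follows by the same bootstrap as in Step~2 of the lemma.

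\textbf{Asymptotics.} From \eqref{eq:vnc-endpoint-smallness} we have $w=o(|x|^{1/2})$ at $0$ and at $\infty$, uniformly in direction. This gives $u_\ell=c_*|x|^{1/2}+o(|x|^{1/2})$ near the origin, so in particular $u_\ell\to0$ and $u_\ell$ solves \eqref{eq1}. It also gives $u_\ell=\ell|x|+o(|x|^{1/2})$ at infinity. Consequently $u_\ell/|x|\to\ell\neq0$, whereas $u_*/|x|\to0$, so $u_\ell\not\equiv u_*$.

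The main obstacle is mild: checking that $\mathcal H_{\mathrm{rad}}$ is closed and that radial test functions suffice. Both are handled by the averaging argument above. Alternatively, one can minimize over all of $\mathcal H$ and note that, since $\mathcal J$ is strictly convex (the norm term is strictly convex and the $G$ and $f$ terms are convex), the minimizer is unique and hence rotation invariant.
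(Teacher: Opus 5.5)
Your proof is correct. Your main route differs from the paper's, but the alternative in your last sentence is exactly the paper's argument.

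The paper minimizes $\mathcal J_\ell$ over all of $\mathcal H$, so the Euler--Lagrange equation holds against every $\eta\in C_c^\infty(\Omega)$ immediately. It then gets radiality from the inequality $\mathcal J_\ell\bigl((1-\lambda)w_1+\lambda w_2\bigr)\le(1-\lambda)\mathcal J_\ell(w_1)+\lambda\mathcal J_\ell(w_2)-\frac{\lambda(1-\lambda)}{2}\|w_1-w_2\|_{\mathcal H}^2$ on the convex set $\{\mathcal J_\ell<\infty\}$, which makes the minimizer unique, together with the invariance $\mathcal J_\ell(T_{\mathcal O}w)=\mathcal J_\ell(w)$.

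Your main route reverses the order. You impose radiality from the start by minimizing over the fixed-point subspace $\mathcal H_{\mathrm{rad}}$. You then recover the full weak equation by Palais' principle of symmetric criticality, implemented by averaging test functions over $\mathrm{SO}(3)$. The interchange $\Lambda(\bar\eta)=\int\Lambda(\eta\circ\mathcal O)\,d\mathcal O$ is a harmless Fubini step, since $\Delta u_\ell\in L^1_{\mathrm{loc}}(\Omega)$ and $u_\ell^{-7}$ is bounded on the rotation-invariant compact set containing $\operatorname{supp}\eta$ and $\operatorname{supp}\bar\eta$.

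What your route buys is independence from convexity. It uses only the radiality of $\phi_\ell$ and $f_\ell$, so it would still work for a nonconvex potential where minimizers need not be unique. What the paper's route buys is a stronger conclusion: the radial solution is the unique minimizer of $\mathcal J_\ell$ on all of $\mathcal H$, and no separate symmetric-criticality step is needed.

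One small correction to your bookkeeping. $\mathcal H_{\mathrm{rad}}$ is closed, and hence weakly closed as the direct method requires, because it is the common fixed set of the isometries $T_{\mathcal O}$; this does not come from averaging. Averaging is only needed to identify $\mathcal H_{\mathrm{rad}}$ with the closure of radial test functions, and your argument never uses that. It only needs radial elements of $C_c^\infty(\Omega)$ such as $\bar\eta$ to lie in $\mathcal H_{\mathrm{rad}}$, which is immediate.
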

\begin{proof}
	Set \(Q_\ell(x)=\ell|x|\). Then \(Q_\ell>0\) and
	\(\Delta^2Q_\ell=0\) in \(\Omega\). As in the proof of
	Lemma~\ref{prop:variational-nonradial-counterexample}, define
	$$
	\phi_\ell=\chi(|x|)s+(1-\chi(|x|))Q_\ell,
	\qquad
	f_\ell=\Delta^2\phi_\ell-\phi_\ell^{-7},
	$$
	and define \(G_\ell\) and \(\mathcal J_\ell\)  by \eqref{G} and \eqref{eq:vnc-functional}, with
	\((\phi,f)\) replaced by \((\phi_\ell,f_\ell)\). Since \(f_\ell=0\) for \(0<|x|<1\), \(f_\ell=-\ell^{-7}|x|^{-7}\) for \(|x|>2\), and \(f_\ell\) is smooth on \(1\leq|x|\leq2\), we have 
	   $$
	   \int_\Omega |f_\ell(x)|\,|x|^{1/2}\,dx<\infty.
	   $$
	   By the same argument as in the proof of Lemma~\ref{prop:variational-nonradial-counterexample}, \(\mathcal J_\ell\) admits a minimizer \(w_\ell\in\mathcal H\) such that \(u_\ell=\phi_\ell+w_\ell\) is positive and smooth in \(\Omega\) and satisfies \(\Delta^2 u_\ell=u_\ell^{-7}\) in \(\Omega\).

	It remains to prove that \(u_\ell\) is radial. Set \(\mathcal D_\ell
	=\{w\in\mathcal H:\mathcal J_\ell(w)<\infty\}\). Since \(\Phi\) is convex, the map \(a\mapsto G_\ell(x,a)\) is convex for every \(x\in\Omega\). For any \(w_1,w_2\in\mathcal D_\ell\) and \(0<\lambda<1\),
	$$
	G_\ell\bigl(x,(1-\lambda)w_1+\lambda w_2\bigr)
	\leq(1-\lambda)G_\ell(x,w_1)+\lambda G_\ell(x,w_2).
	$$
	Since the right-hand side is integrable, \((1-\lambda)w_1+\lambda w_2\in\mathcal D_\ell\). Thus
	\(\mathcal D_\ell\) is convex. Moreover
	$$
	\|(1-\lambda)w_1+\lambda w_2\|_{\mathcal H}^2
	=(1-\lambda)\|w_1\|_{\mathcal H}^2
	+\lambda\|w_2\|_{\mathcal H}^2
	-\lambda(1-\lambda)\|w_1-w_2\|_{\mathcal H}^2.
	$$
	Together with the convexity of \(G_\ell(x,\cdot)\), we obtain
	$$
	\begin{aligned}
		\mathcal J_\ell\bigl((1-\lambda)w_1+\lambda w_2\bigr)\leq(1-\lambda)\mathcal J_\ell(w_1)
		+\lambda\mathcal J_\ell(w_2)-\frac{\lambda(1-\lambda)}2
		\|w_1-w_2\|_{\mathcal H}^2.
	\end{aligned}
	$$
	Thus \(\mathcal J_\ell\) is strictly convex on \(\mathcal D_\ell\), and its minimizer \(w_\ell\) is unique.
	
	For \(\mathcal O\in\mathrm{SO}(3)\), define
	\(T_{\mathcal O}\psi=\psi\circ\mathcal O\) on \(E\). The rotational
	invariance of the Laplacian and a change of variables give
	\[
	\|T_{\mathcal O}\psi\|_E=\|\psi\|_E\qquad\text{for every }\psi\in E.
	\]
Since \(E\) is dense in \(\mathcal H\), \(T_{\mathcal O}\) extends uniquely to a linear isometry on \(\mathcal H\). Moreover, the radiality of \(\phi_\ell\) and \(f_\ell\) implies
	\[
	\mathcal J_\ell(T_{\mathcal O}w)
	=\mathcal J_\ell(w)
	\qquad\text{for every }w\in\mathcal H.
	\]
	Thus \(T_{\mathcal O}w_\ell\) is also a minimizer. By uniqueness, \(T_{\mathcal O}w_\ell=w_\ell\) for every \(\mathcal O\in\mathrm{SO}(3)\), and hence \(u_\ell=\phi_\ell+w_\ell\) is radial.
	
	Since \(\phi_\ell=s\) near the origin and
	\(\phi_\ell=Q_\ell\) near infinity, \eqref{eq:vnc-endpoint-smallness} yields
	\[
	\begin{aligned}
		u_\ell(x)
		&=c_*|x|^{1/2}+o\bigl(|x|^{1/2}\bigr)
		&&\text{as }x\to0,\\
		u_\ell(x)
		&=\ell|x|+o\bigl(|x|^{1/2}\bigr)
		&&\text{as }|x|\to\infty.
	\end{aligned}
	\]
	Hence, \(u_\ell\) is a positive solution of \eqref{eq1} and \(u_\ell\not\equiv u_*\).
\end{proof}

\section*{Acknowledgements}

The first author is supported by the National Natural Science Foundation of
China (NSFC, No.~12671132) and the Science and Technology Commission of
Shanghai Municipality (No.~22DZ2229014).


\begin{thebibliography}{99}
\bibitem{AxlerBourdonRamey2001}
S.~Axler, P.~Bourdon and W.~Ramey,
\emph{Harmonic Function Theory},
Second Edition, Graduate Texts in Mathematics, 137,
Springer-Verlag, New York, 2001.

\bibitem{Branson}
T.~P. Branson,
\emph{Differential operators canonically associated to conformal structures},
Math. Scand. \textbf{57} (1985), 293-345.

\bibitem{CGS}
L.~A. Caffarelli, B. Gidas and J. Spruck,
\emph{Asymptotic symmetry and local behavior of semilinear elliptic equations
with critical Sobolev growth},
Comm. Pure Appl. Math. \textbf{42} (1989), 271-297.

\bibitem{ChangYang}
S.-Y.~A. Chang and P.~C. Yang,
\emph{Extremal metrics of zeta functional determinants on $4$-manifolds},
Ann. of Math. (2) \textbf{142} (1995), 171-212.

\bibitem{ChoiXu}
Y.~S. Choi and X. W. Xu,
\emph{Nonlinear biharmonic equations with negative exponents},
J. Differential Equations \textbf{246} (2009), 216-234.

\bibitem{FrankKonig}
R.~L. Frank and T. K\"onig,
\emph{Classification of positive singular solutions to a nonlinear
biharmonic equation with critical exponent},
Anal. PDE \textbf{12} (2019), 1101-1113.


\bibitem{GazzolaGrunauSweers2010}
F. Gazzola, H.-C. Grunau and G. Sweers,
\emph{Polyharmonic Boundary Value Problems},
Lecture Notes in Mathematics 1991, Springer, Berlin, 2010.

\bibitem{GJMS}
C.~R. Graham, R. Jenne, L.~J. Mason and G.~A.~J. Sparling,
\emph{Conformally invariant powers of the Laplacian. I. Existence},
J. London Math. Soc. (2) \textbf{46} (1992), 557-565.



\bibitem{Guerra}
I. Guerra,
\emph{A note on nonlinear biharmonic equations with negative exponents},
J. Differential Equations \textbf{253} (2012), 3147-3157.

\bibitem{GuoHuangWangWei}
Z. M. Guo, X. Huang, L. P. Wang and J. C. Wei,
\emph{On Delaunay solutions of a biharmonic elliptic equation with critical
exponent}, J. Anal. Math. \textbf{140} (2020), 371-394.

\bibitem{GuoWeiZhou}
Z. M.  Guo, L. Wei, and F. Zhou,
\emph{Radial symmetry of positive entire solutions of a fourth order elliptic
equation with a singular nonlinearity},
arXiv:1804.08215 (2018).


\bibitem{HyderWei}
A. Hyder and J. Wei,
\emph{Non-radial solutions to a bi-harmonic equation with negative exponent},
Calc. Var. Partial Differential Equations \textbf{58} (2019), Paper No.~198.

\bibitem{KMPS}
N. Korevaar, R. Mazzeo, F. Pacard and R. Schoen,
\emph{Refined asymptotics for constant scalar curvature metrics with isolated
singularities},
Invent. Math. \textbf{135} (1999), 233-272.

\bibitem{Lin}
C.-S. Lin,
\emph{A classification of solutions of a conformally invariant fourth order
equation in $\RR^n$},
Comment. Math. Helv. \textbf{73} (1998), 206-231.

\bibitem{Marques}
F.~C. Marques,
\emph{Isolated singularities of solutions to the Yamabe equation},
Calc. Var. Partial Differential Equations \textbf{32} (2008), 349-371.

\bibitem{MazzeoPacard}
R. Mazzeo and F. Pacard,
\emph{Constant scalar curvature metrics with isolated singularities},
Duke Math. J. \textbf{99} (1999), 353-418.

\bibitem{McKennaReichel}
P.~J. McKenna and W. Reichel,
\emph{Radial solutions of singular nonlinear biharmonic equations and
applications to conformal geometry},
Electron. J. Differential Equations \textbf{2003} (2003), no.~37, 1-13.

\bibitem{NgoNguyenPhan}
Q.~A. Ng{\^o}, V.~H. Nguyen, and Q.~H. Phan,
\emph{A pointwise inequality for a biharmonic equation with negative exponent
and related problems},
Nonlinearity \textbf{31} (2018), no.~12, 5484-5499.

\bibitem{Paneitz}
S.~M. Paneitz,
\emph{A quartic conformally covariant differential operator for arbitrary
pseudo-Riemannian manifolds},
SIGMA Symmetry Integrability Geom. Methods Appl. \textbf{4} (2008),
Paper 036, 3 pp.

\bibitem{PeleskoBernstein}
J.~A. Pelesko and D.~H. Bernstein,
\emph{Modeling MEMS and NEMS},
Chapman \& Hall/CRC, Boca Raton, FL, 2003. xxiv+357 pp. ISBN: 1-58488-306-5.

%\bibitem{Santra}
%S. Santra,
%\emph{On the positive solutions for a perturbed negative exponent problem on
%$\RR^3$},
%Discrete Contin. Dyn. Syst. \textbf{38} (2018), 1441-1460.

\bibitem{WeiXu}
J. C. Wei and X. W. Xu,
\emph{Classification of solutions of higher order conformally invariant
equations},
Math. Ann. \textbf{313} (1999), no.2, 207-228.

\bibitem{XiongZhang}
J. G. Xiong and L. Zhang,
\emph{Isolated singularities of solutions to the Yamabe equation in dimension
$6$},
Int. Math. Res. Not. IMRN \textbf{2022} (2022), no.~12, 9571-9597.

\bibitem{Xu}
X. W. Xu,
\emph{Exact solutions of nonlinear conformally invariant integral equations
in $\RR^3$},
Adv. Math. \textbf{194} (2005), no.2,  485-503.

\bibitem{XuYang}
X. W. Xu and P.~C. Yang,
\emph{On a fourth order equation in 3-D}, ESAIM Control Optim. Calc. Var. \textbf{8} (2002), 1029-1042.

\end{thebibliography}
\end{document}